\numberwithin{equation}{section}
\newtheorem{prop}{Proposition}
\newtheorem{lemma}[prop]{Lemma}
\newtheorem{thm}[prop]{Theorem}
\newtheorem{cor}[prop]{Corollary}
\numberwithin{prop}{section}
\theoremstyle{definition}
\newtheorem{defn}[prop]{Definition}
\newtheorem{rmk}[prop]{Remark}
\renewcommand{\geq}{\geqslant}
\renewcommand{\leq}{\leqslant}
\newcommand{\del}{\partial}
\newcommand{\dt}{\frac{\partial}{\partial t}}
\newcommand{\brs}[1]{\left| #1 \right|}
\newcommand{\gG}{\Gamma}
\newcommand{\gD}{\Delta}
\newcommand{\gd}{\delta}
\newcommand{\gk}{\kappa}
\newcommand{\gl}{\lambda}
\newcommand{\gw}{\omega}
\newcommand{\ga}{\alpha}
\newcommand{\gb}{\beta}
\newcommand{\gL}{\Lambda}
\renewcommand{\ge}{\epsilon}
\newcommand{\N}{\nabla}
\newcommand{\FF}{\mathcal F}
\newcommand{\NN}{\mathcal N}
\newcommand{\WW}{\mathcal W}
\newcommand{\til}[1]{\widetilde{#1}}
\renewcommand{\bar}[1]{\overline{#1}}
\newcommand{\IP}[1]{\left<#1\right>}
\DeclareMathOperator{\Rc}{Rc}
\DeclareMathOperator{\Rm}{Rm}
\DeclareMathOperator{\divg}{div}
\DeclareMathOperator{\Vol}{Vol}
\DeclareMathOperator{\Area}{Area}
\begin{document}

\title[Scalar curvature, entropy, and generalized Ricci flow]{Scalar curvature, entropy, and generalized Ricci flow}

\begin{abstract} We derive a family of weighted scalar curvature monotonicity formulas for generalized Ricci flow, involving an auxiliary dilaton field evolving by a certain reaction-diffusion equation motivated by renormalization group flow.  These scalar curvature monotonicities are dual to a new family of Perelman-type energy and entropy monotonicity formulas by coupling to a solution of the associated weighted conjugate heat equation.  In the setting of Ricci flow, we further obtain a new family of convex Nash entropies and pseudolocality principles.
\end{abstract}

\author{Jeffrey Streets}
\address{Rowland Hall\\
         University of California, Irvine, CA}
\email{\href{mailto:jstreets@uci.edu}{jstreets@uci.edu}}

\date{\today}

\maketitle

\section{Introduction}

A dominant theme in the analysis of Ricci flow is the understanding of curvature positivity conditions preserved by the flow \cite{Wilking, BS, Hamilton3folds, Ham4PCO, Ham4PIC}.  Most fundamental among these, as observed by Hamilton in his original paper \cite{Hamilton3folds}, is the preservation of a lower bound on the scalar curvature.  This bound is essential for detailed analyses of heat kernels, ancient solutions, and singularity formation of Ricci flow (cf. e.g. \cite{BamlerZhang, ChenPSC, CMSing, HeinNaber, Zhang1, Zhang2, Zhang3}).  A second dominant theme is the key role played by self-similar solutions of the flow, i.e. Ricci solitons, which partly indicate the subtle interplay between Ricci flow and the diffeomorphism group.  Such solutions, and the interaction between Ricci flow and the diffeomorphism group, lie at the foundation of various key estimates for Ricci flow, such as Hamilton's Harnack estimate \cite{HamHarnack}, and Perelman's energy, entropy, reduced volume functionals, and differential Harnack estimate \cite{Perelman1}.  These various tools combine to reveal the structure of singular sets of Ricci flow \cite{Perelman1, Bamler2, Bamler1, Bamler3}, leading to  deep topological applications \cite{Perelman2, BK}.  In this paper we extend the fundamental circle of ideas around scalar curvature monotonicity,  Harnack estimates, and heat kernels to \emph{generalized Ricci flow}.  In the process we obtain several new estimates for Ricci flow, in particular a new family of convex Nash entropies, and pseudolocality estimates

  Given a smooth manifold $M$, a one-parameter family of metrics $g_t$ and closed three-forms $H_t$ is a solution of generalized Ricci flow if
\begin{align*}
\dt g =&\ - 2 \Rc + \tfrac{1}{2} H^2,\\
\dt b =&\ - d^*_g H, \qquad H = H_0 + db,
\end{align*}
where $H^2(X,Y) = \IP{i_X H, i_Y H}$.  This equation arises independently in mathematical physics \cite{OSW,Polchinski}, complex geometry \cite{PCF, PCFReg}, and generalized geometry \cite{GF19, StreetsTdual, GKRF}, and we refer to \cite{GRFbook} for further background.  Some global existence and convergence results can be found in \cite{ASnondeg, JFS, SRYM2, StreetsND}.  Note that $H \equiv 0$ is preserved by the flow (cf. \cite{GRFbook} Proposition 4.20), and the metric then solves Ricci flow.  Thus in the remainder of this paper many results are for generalized Ricci flow, with the attendant results for Ricci flow occurring as a special case.

\subsection{Scalar curvature monotonicity}

Given a metric $g$, closed three-form $H$, and smooth function $f$, let
\begin{align*}
\Rc^{H,f} = \Rc - \tfrac{1}{4} H^2 +\N^2 f - \tfrac{1}{2} \left( d^*_g H + i_{\N f} H \right), \qquad R^{H,f} := R - \tfrac{1}{12} \brs{H}^2 + 2 \gD f - \brs{\N f}^2.
\end{align*}
The tensor $\Rc^{H,f}$ reduces to the Ricci tensor of the Bismut connection with torsion $H$ when $f = 0$, and in general can be motivated by extending ideas from Bakry-Emery \cite{BakryEmery} to the Laplacian of the Bismut connection acting on one-forms.  The scalar curvature $R^{H,f}$ arises in the Lichnerowicz-type formula for the cubic Dirac operator of Bismut \cite{Bismut} in the case $f = 0$.  The general case occurs when computing this formula using a weighted volume form (cf. \cite{Ozuch, Perelman1}).

Perelman's energy and entropy monotonicity formulas can be interpreted as differential inequalities for the weighted scalar curvature $R^f$.  The key point is to allow the weight $u = e^{-f}$ to evolve by the conjugate heat equation.  In this circle of ideas the function $u = e^{-f}$ is the ``dilaton'' in physics terminology.  Interestingly, in mathematical physics literature a different equation is suggested for the dilaton in the RG flow of the $H$-twisted nonlinear sigma model \cite{Polchinski}.  Specifically, given $(g_t, H_t)$ a solution of generalized Ricci flow, we let $\square = \dt - \Delta$ denote the forward heat operator, and fix $\phi$ a solution to the associated \emph{dilaton flow}, 
\begin{align*}
\square \phi = \tfrac{1}{6} \brs{H}^2.
\end{align*}
With the setup above we then obtain the following evolution equation for the generalized scalar curvature:
\begin{prop} \label{p:scalarmon} (cf. Proposition \ref{p:scalarev}) Given $(M^n, g_t, H_t, \phi_t)$ a solution to generalized Ricci flow, one has
\begin{align} \label{f:RFscal}
\square R^{H,\phi} = 2 \brs{\Rc^{H,\phi}}^2.
\end{align}
\end{prop}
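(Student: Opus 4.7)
The plan is to prove \eqref{f:RFscal} by direct computation: differentiate each piece of $R^{H,\phi} = R - \tfrac{1}{12}\brs{H}^2 + 2\gD\phi - \brs{\N\phi}^2$ along the coupled flow, apply Bochner and commutator identities, and then algebraically reassemble the result as $2\brs{\Rc^{H,\phi}}^2$.

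First I would dispense with the pure $(g,H)$ piece by computing $\square R^H$ for $R^H := R - \tfrac{1}{12}\brs{H}^2$. Applying the standard metric variation formula for $R$ to $\dt g = -2\Rc + \tfrac12 H^2$, together with $\dt H = -d\, d^*_g H$ (which follows from $H = H_0 + db$ and $\dt b = -d^*_g H$), one obtains after standard manipulation the known identity
\begin{align*}
\square R^H = 2\bigl|\Rc - \tfrac14 H^2\bigr|^2 + \tfrac12 \brs{d^*_g H}^2,
\end{align*}
essentially the $\phi = 0$ case of the generalized scalar curvature evolution (cf.\ \cite{GRFbook}).

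Next I would compute $\square(2\gD\phi - \brs{\N\phi}^2)$. Three inputs are needed: commuting $\dt$ with $\gD$, which produces a contraction of $\N^2\phi$ with $\dt g = -2\Rc + \tfrac12 H^2$; the Bochner identity $\gD \brs{\N\phi}^2 = 2\brs{\N^2\phi}^2 + 2\IP{\N\phi, \N\gD\phi} + 2\Rc(\N\phi, \N\phi)$; and the dilaton equation $\dt\phi = \gD\phi + \tfrac{1}{6}\brs{H}^2$ itself, used to substitute for $\dt\phi$ and, after differentiating, for $\N\dt\phi$. The outcome contains the terms $2\brs{\N^2\phi}^2$ and $2\IP{\Rc - \tfrac14 H^2, \N^2\phi}$, along with $H$--dilaton coupling terms proportional to $H^2(\N\phi,\N\phi)$ and to $\IP{\N\phi, d^*_g H}$-type expressions, the latter arising when $\tfrac16 \N \brs{H}^2$ is rewritten using $dH = 0$ and paired with $\N\phi$.

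The final step is algebraic assembly. Since $\Rc^{H,\phi}$ decomposes orthogonally into a symmetric piece $\Rc - \tfrac14 H^2 + \N^2\phi$ and an antisymmetric two-form piece $-\tfrac12 (d^*_g H + i_{\N\phi}H)$, one has
\begin{align*}
\brs{\Rc^{H,\phi}}^2 = \bigl|\Rc - \tfrac14 H^2 + \N^2\phi\bigr|^2 + \tfrac14 \bigl|d^*_g H + i_{\N\phi}H\bigr|^2.
\end{align*}
Expanding both squares and matching term-by-term against the outcome of the previous two steps should close the identity. The main obstacle is the careful bookkeeping of the $H$-coupled cross terms: the coefficient $\tfrac16$ in $\square\phi = \tfrac16 \brs{H}^2$ is presumably exactly what is needed so that the mixed torsion--dilaton terms assemble into $\tfrac12 \IP{d^*_g H, i_{\N\phi}H}$ with the correct sign. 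Tracking the integrations by parts that use $dH = 0$, and handling the commutator $[\dt, \gD]$ in full, is where any potential sign or coefficient mismatch would appear and where the bulk of the computational effort lies.
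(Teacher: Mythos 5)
Your proposal follows essentially the same route as the paper: compute $\square$ on each term of $R^{H,\phi}$ (using the variation of $R$, the Bochner formula, the $[\dt,\gD]$ commutator, and the Bianchi identities $\divg H^2 = \tfrac16 \N \brs{H}^2 - \IP{d^*_g H, H}$ and its divergence coming from $dH=0$), then reassemble via the orthogonal splitting of $\Rc^{H,\phi}$ into its symmetric and two-form parts; your intermediate identity $\square R^H = 2\brs{\Rc - \tfrac14 H^2}^2 + \tfrac12\brs{d^*_g H}^2$ and the decomposition $\brs{\Rc^{H,\phi}}^2 = \brs{\Rc - \tfrac14 H^2 + \N^2\phi}^2 + \tfrac14\brs{d^*_g H + i_{\N\phi}H}^2$ are both correct and match the paper's final assembly. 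The only difference is organizational (you split off the $\phi$-free part first, while the paper carries all four terms together), so this is the same proof in substance.
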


\begin{rmk} In the case of Ricci flow the dilaton flow is simply the forward heat flow.  Here the weighted scalar curvature monotonicity appears in (\cite{ChowRFbook} Chapter 7 Lemma 6.88).  There monotonicty formulas are shown for a one-parameter family of dilaton flows interpolating between the forward heat flow and the conjugate heat equation, which incidentally also extend to generalized Ricci flow.
\end{rmk}

Proposition \ref{p:scalarmon} implies that a lower bound on $R^{H,\phi}$ is preserved on a compact manifold.  Also by applying the strong maximum principle we obtain a rigidity result.
\begin{cor} \label{c:rigidity} (cf. Corollary \ref{c:rigidity2}) Let $(M, g, H, \phi)$ satisfy $R^{H,\phi} \geq 0$.  If $M$ is compact, then either
\begin{enumerate}
\item The triple $(g, H, \phi)$ defines a generalized Ricci soliton and $R^{H,\phi} \equiv 0$
\item The manifold $M$ admits a triple $(\bar{g}, \bar{H}, \bar{\phi})$ such that $R^{\bar{H},\bar{\phi}} > 0$ everywhere.
\end{enumerate}
\end{cor}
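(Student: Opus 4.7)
The strategy is to evolve the given initial data by generalized Ricci flow, with $\phi$ running along the dilaton flow, and exploit Proposition~\ref{p:scalarmon} together with the strong minimum principle. Short-time existence on compact $M$ supplies a smooth solution $(g_t, H_t, \phi_t)$ on an interval $[0, T)$ with $T > 0$, on which Proposition~\ref{p:scalarmon} yields
\begin{align*}
\square R^{H,\phi} = 2 \brs{\Rc^{H,\phi}}^2 \geq 0.
\end{align*}
Since $R^{H,\phi} \geq 0$ at $t = 0$, the weak maximum principle propagates this to $R^{H,\phi} \geq 0$ on all of $M \times [0,T)$.

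Next I would apply a dichotomy based on the strong minimum principle for the scalar parabolic inequality $\square R^{H,\phi} \geq 0$. If there exists $t_0 \in (0, T)$ such that $R^{H,\phi}(\cdot, t_0) > 0$ everywhere on $M$, then setting $(\bar{g}, \bar{H}, \bar{\phi}) := (g_{t_0}, H_{t_0}, \phi_{t_0})$ realizes option~(2). Otherwise, for every $t \in (0, T)$ the function $R^{H,\phi}(\cdot, t)$ vanishes at some interior point, and the strong minimum principle forces $R^{H,\phi} \equiv 0$ on $M \times [0, t]$. Taking a union over $t$ yields $R^{H,\phi} \equiv 0$ identically on $M \times [0, T)$; substituting back into \eqref{f:RFscal} then forces $\brs{\Rc^{H,\phi}}^2 \equiv 0$, hence $\Rc^{H,\phi} \equiv 0$ on the initial data. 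This is precisely the steady generalized Ricci soliton equation, i.e., option~(1).

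The main technical point, and really the only obstacle, is justifying the strong minimum principle for $R^{H,\phi}$ on a background of evolving metrics. The inequality $\square R^{H,\phi} \geq 0$ expresses $R^{H,\phi}$ as a supersolution of the heat equation associated to $\square$; on any closed sub-interval $[0, t_0] \subset [0,T)$, the metrics $g_t$ are smooth and uniformly equivalent, so the coefficients of $\Delta_{g_t}$ are smooth and uniformly bounded, and the classical linear parabolic strong minimum principle applies without modification. As a consistency check, one should also verify that $\Rc^{H,\phi} \equiv 0$ encodes the intended notion of a steady generalized Ricci soliton: decomposing into symmetric and antisymmetric parts gives $\Rc - \tfrac14 H^2 + \N^2 \phi = 0$ and $d^*_g H + i_{\N \phi} H = 0$, which are precisely the defining steady soliton equations.
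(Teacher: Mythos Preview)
Your proposal is correct and follows essentially the same approach as the paper: the paper's proof is the single sentence ``This follows from the strong maximum principle applied to the evolution equation of Proposition~\ref{p:scalarev},'' and you have simply spelled out what that entails (short-time existence, the dichotomy from the strong minimum principle, and the identification $\Rc^{H,\phi}\equiv 0$ with the steady soliton equations).
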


\begin{rmk} The scalar curvature evolution equation of Proposition \ref{p:scalarmon} and attendant corollaries can be generalized in several ways.  In particular, noting that all quantities involved ultimately only depend on $d f$, one may replace $df$ with a general one-form $\ga$ evolving by the operator $\square \ga = \tfrac{1}{6} d \brs{H}^2$, and obtain a monotone curvature quantity (cf. \S \ref{s:oneform}).  Furthermore the tensor $H$ may be replaced with a formal linear combination of differential forms of all degrees, and with an appropriately weighted scalar curvature and dilaton flow one again obtains a monotone curvature quantity (cf. \S \ref{s:GGRF}).  One special case of this is the extended Ricci flow system of List \cite{List}, coupling to an exact $1$-form.  Another special case is the Ricci-Yang-Mills flow (\cite{StreetsThesis, YoungThesis, StreetsRYMsurfaces, SRYM2}), a coupling of the Ricci and Yang-Mills flows which in the case of abelian structure group corresponds to the case that $H$ is a two-form, specifically the principal curvature.
\end{rmk}

\subsection{Entropy formulas and Harnack estimates}

Further structure is revealed when we treat $e^{-\phi}$ as a volume density, mirroring the role played by $u = e^{-f}$ in Perelman's work.  
Perelman's idea underlying his energy and entropy formulas is to let $u = e^{-f}$ be a solution of the conjugate heat equation $\square^* u = 0$, where
\begin{align*}
\square^* = - \dt - \gD + R.
\end{align*}
Given this, one obtains the differential equation
\begin{align} \label{f:RFsHarnack}
\square^* \left( R^f u \right) =&\ - 2 \brs{\Rc^f}^2 u.
\end{align}
This is the pointwise computation underlying the monotonicity of Perelman's $\FF$-functional, and including a further weighting of $u$ by a time scale yields Perelman's entropy density monotonicity.  There is a curious duality between equations (\ref{f:RFscal}) (in the setting of Ricci flow) and (\ref{f:RFsHarnack}).  On the one hand, the weighted scalar curvature is a supersolution to a forward heat equation when the weight satisfies the forward heat equation.  On the other hand, after coupling to a solution of the conjugate heat equation, the weighted scalar curvature is a subsolution to the conjugate heat equation.  We next clarify and deepen this apparent linkage, generalizing the circle of ideas around scalar curvature, entropy formulas, and conjugate heat kernels.  

We return to the setting of generalized Ricci flow, where the conjugate heat operator takes the form
\begin{align*}
\square^* = - \dt - \gD + R - \tfrac{1}{4} \brs{H}^2.
\end{align*}
The fundamental formula (\cite{GRFbook} Chapter 6) underlying the energy monotonicity which generalizes (\ref{f:RFsHarnack}) is then
\begin{align} \label{f:GRFsHarnack}
\square^* \left( R^{H,f} u \right) =&\ - 2 \brs{\Rc^{H,f}}^2 u.
\end{align}
When integrated against $e^{-f} dV_g$, this pointwise formula yields the gradient flow interpretation for generalized Ricci flow.  As natural as this differential equation may seem, it is difficult to exploit in part because the conjugate heat operator itself is difficult to control.  In the setting of Ricci flow the a priori lower bound on the scalar curvature controls the reaction term in the conjugate heat operator, and this plays a key role in many applications.  

We get a better behaved heat kernel by including a further weight given by a solution to the dilaton flow.  Indeed the conjugate of the heat operator, taken with respect to the measure $dm = e^{-\phi} dV_g$, is
\begin{align*}
\square^*_{\phi} = - \dt - \gD + 2 \N \phi + R^{H,\phi}.
\end{align*}
We then obtain a generalization of (\ref{f:GRFsHarnack}):

\begin{prop} \label{p:steadyharnackev} (cf. Proposition \ref{p:steadyharnackev2}) Let $(M, g_t, H_t, \phi_t)$ denote a solution to generalized Ricci flow, and suppose $u = e^{-f}$ is a solution of $\square^*_{\phi} u = 0$.
Then
\begin{align*}
\square^*_{\phi} \left( R^{H,f+ \phi} u \right) =&\ - 2 \brs{\Rc^{H,f+\phi}}^2 u.
\end{align*}
\end{prop}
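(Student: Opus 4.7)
The strategy is to recognize that $\square^*_\phi$ is the intertwined form of the ordinary conjugate heat operator $\square^*$ by the weight $e^{-\phi}$, thereby reducing the weighted identity to (\ref{f:GRFsHarnack}) applied with the absorbed weight $f + \phi$.

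The first step is to establish the intertwining identity
\begin{align*}
\square^* \bigl( F e^{-\phi} \bigr) = e^{-\phi} \square^*_\phi F
\end{align*}
for any smooth function $F$. I would verify this by direct expansion of $-\partial_t(F e^{-\phi})$, $-\gD(F e^{-\phi})$, and $(R - \tfrac14 \brs{H}^2) F e^{-\phi}$, then factoring out $e^{-\phi}$. The $F$-differential terms assemble into $-\partial_t F - \gD F + 2 \IP{\N\phi, \N F}$, while the $F$-linear residue is
\begin{align*}
F \bigl( \partial_t \phi + \gD \phi - \brs{\N\phi}^2 + R - \tfrac14 \brs{H}^2 \bigr) = F\, R^{H,\phi} + F \bigl( \square \phi - \tfrac16 \brs{H}^2 \bigr),
\end{align*}
and the parenthesized correction vanishes identically by the dilaton flow equation. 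This computation simultaneously pins down the numerical constants in the definitions of $R^{H,\phi}$, $\square^*$, and $\square\phi$ in exactly the configuration needed.

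With the intertwining in hand, the proposition reduces to a formal manipulation. Applying it to $F = u$ and using the hypothesis $\square^*_\phi u = 0$ shows that $\tilde u := u e^{-\phi} = e^{-(f+\phi)}$ is a solution of the \emph{unweighted} conjugate heat equation $\square^* \tilde u = 0$. Next I invoke (\ref{f:GRFsHarnack}) with weight function $f + \phi$:
\begin{align*}
\square^* \bigl( R^{H, f+\phi} \tilde u \bigr) = - 2 \brs{\Rc^{H, f+\phi}}^2 \tilde u.
\end{align*}
Applying the intertwining identity a second time with $F = R^{H, f+\phi} u$ converts the left-hand side to $e^{-\phi} \square^*_\phi\bigl( R^{H, f+\phi} u \bigr)$ and the right-hand side to $-2 \brs{\Rc^{H, f+\phi}}^2 u\, e^{-\phi}$; cancelling $e^{-\phi}$ produces the stated identity.

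The only genuinely computational step is the intertwining identity, and the heart of it is the cancellation $\square\phi = \tfrac16 \brs{H}^2$; everything else is a formal chain of substitutions. One could instead attempt a direct attack on $\square^*_\phi(R^{H,f+\phi} u)$, combining Proposition \ref{p:scalarmon}, the evolution of $f$, and numerous cross terms, but this would obscure the duality between the two weightings and substantially increase the calculation.
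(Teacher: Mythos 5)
Your proposal is correct and takes essentially the same route as the paper: the paper likewise proves the intertwining identity $\square^*(u e^{-\phi}) = (\square^*_{\phi} u) e^{-\phi}$ by the very direct computation you describe (Lemma \ref{l:conjugateheat}, item (5), where the dilaton flow equation produces the cancellation), uses it to see that $v = e^{-f-\phi}$ solves $\square^* v = 0$, invokes the unweighted formula (\ref{f:GRFsHarnack}) from \cite{GRFbook} with weight $F = f+\phi$, and then applies the intertwining identity once more to cancel $e^{-\phi}$.
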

\noindent Note that in this formula there are naturally two functional degrees of freedom given by a solution to the dilaton flow, and then a solution to the weighted conjugate heat equation.  This furthermore yields a generalization of the gradient flow property of generalized Ricci flow (cf. Proposition \ref{p:gradientflow}).

By including further weighting of $u$ by a time scale we also obtain a generalization of Perelman's shrinker entropy monotonicity and differential Harnack estimate, parameterized by the choices of $f$ and $\phi$.  In particular, recall Perelman's entropy density
\begin{align*}
W^{H,F} = \tau R^{H,F} + F - n,
\end{align*}
To obtain a monotone quantity under generalized Ricci flow we must include a further functional parameter which measures the concentration of $H$, namely a solution to the \emph{conjugate dilaton flow}.  In particular we let $\psi$ denote a solution of
\begin{align*}
\square^*_{\phi} (\psi u) = - \tfrac{1}{6} \brs{H}^2 u.
\end{align*}
We then obtain:
\begin{prop} \label{p:GRFharnack} (cf. Proposition \ref{p:Ricciflowharnack}) Let $(M, g_t, H_t, \phi_t)$ denote a solution to generalized Ricci flow, and suppose $u = (4 \pi \tau)^{-\frac{n}{2}} e^{-f}$ is a solution of $\square^*_{\phi} u = 0$, where $\tau = T - t$ for some fixed $T$.  Furthermore suppose $\psi$ is a solution of the conjugate dilaton flow.  Then
\begin{align*}
\square^*_{\phi} \left[ (W^{H,f+\phi} + \psi) u\right] =&\ - 2 \tau \brs{\Rc^{H,f+\phi} - \frac{1}{2 \tau} g}^2 u.
\end{align*}
\end{prop}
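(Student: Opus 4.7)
The plan is to decompose $(W^{H,f+\phi}+\psi)u = \tau R^{H,f+\phi}u + (f+\phi - n)u + \psi u$ and apply $\square^*_\phi$ termwise. The $\psi u$ piece contributes exactly $-\tfrac{1}{6}\brs{H}^2 u$ by the defining property of the conjugate dilaton flow, and $\square^*_\phi(-nu) = 0$ since $\square^*_\phi u = 0$. For the $\tau R^{H,f+\phi} u$ piece, the product rule together with $\partial_t\tau = -1$ yields $\square^*_\phi(\tau R^{H,f+\phi}u) = \tau\,\square^*_\phi(R^{H,f+\phi}u) + R^{H,f+\phi} u$, and Proposition \ref{p:steadyharnackev} applies once we absorb the spatially constant shift $\tfrac{n}{2}\log(4\pi\tau)$ into the weight (which leaves $R^{H,\cdot}$ and $\Rc^{H,\cdot}$ invariant), giving $\square^*_\phi(R^{H,f+\phi}u) = -2\brs{\Rc^{H,f+\phi}}^2 u$.

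The essential piece is $\square^*_\phi((f+\phi)u)$. The plan is to use the product identity
\begin{align*}
\square^*_\phi(Xu) = X\,\square^*_\phi u - u\bigl(\partial_t + \Delta - 2\N\phi\cdot\N\bigr)X - 2\IP{\N X, \N u},
\end{align*}
and, with $\square^*_\phi u = 0$ and $\N u = -u\N f$, reduce to computing $(\partial_t + \Delta - 2\N\phi\cdot\N)(f+\phi)$. Here $\partial_t\phi = \Delta\phi + \tfrac{1}{6}\brs{H}^2$ comes from the dilaton flow, while $\partial_t f$ is extracted from $\square^*_\phi u = 0$ after dividing by $u$ and using $u = (4\pi\tau)^{-n/2}e^{-f}$, giving
\begin{align*}
\partial_t f = -\Delta f + \brs{\N f}^2 + 2\N\phi\cdot\N f - R^{H,\phi} + \tfrac{n}{2\tau}.
\end{align*}
Substituting and using the definition of $R^{H,\phi}$ to cancel the pure $\phi$ contributions, a short algebraic simplification leaves
\begin{align*}
\square^*_\phi\bigl((f+\phi)u\bigr) = u\Bigl(\brs{\N(f+\phi)}^2 - \tfrac{n}{2\tau} + R - \tfrac{1}{4}\brs{H}^2\Bigr).
\end{align*}

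Summing the four contributions and expanding $R^{H,f+\phi} = R - \tfrac{1}{12}\brs{H}^2 + 2\gD(f+\phi) - \brs{\N(f+\phi)}^2$, the $\brs{\N(f+\phi)}^2$ terms cancel and the $\brs{H}^2$ coefficients add to $-\tfrac{1}{12} - \tfrac{1}{4} - \tfrac{1}{6} = -\tfrac{1}{2}$, producing
\begin{align*}
\square^*_\phi\bigl[(W^{H,f+\phi}+\psi)u\bigr] = -2\tau\brs{\Rc^{H,f+\phi}}^2 u + u\Bigl(2R + 2\gD(f+\phi) - \tfrac{1}{2}\brs{H}^2 - \tfrac{n}{2\tau}\Bigr).
\end{align*}
Since $\tr\Rc^{H,f+\phi} = R - \tfrac{1}{4}\brs{H}^2 + \gD(f+\phi)$ (the skew part of $\Rc^{H,\cdot}$ does not pair with $g$), the parenthesized factor equals $2\tr\Rc^{H,f+\phi} - \tfrac{n}{2\tau}$, and completing the square gives the claim. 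The main obstacle is this final bookkeeping: a residual $\tfrac{1}{6}\brs{H}^2 u$ generated by the source term in the dilaton flow for $\phi$ would prevent the square from closing on its own, and it is precisely to absorb this term that the extra degree of freedom $\psi$, governed by the conjugate dilaton flow, is introduced.
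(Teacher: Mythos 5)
Your proposal is correct and follows essentially the same route as the paper: both extract $\partial_t f$ from $\square^*_{\phi}u=0$, use the dilaton flow for $\phi$, invoke Proposition \ref{p:steadyharnackev} for the $R^{H,f+\phi}$ term, complete the square via the trace $\IP{\Rc^{H,f+\phi},g}=R-\tfrac14\brs{H}^2+\gD(f+\phi)$, and let $\psi$ absorb the leftover $\tfrac16\brs{H}^2 u$. The only difference is organizational (you apply $\square^*_{\phi}$ termwise to the products with $u$, while the paper first computes $(-\dt-\gD+2\N\phi)W^{H,f+\phi}$ and then multiplies by $u$), and your handling of the $(4\pi\tau)^{-n/2}$ normalization by absorbing it into the weight is a correct, if implicit, step in the paper as well.
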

\noindent This yields that if $u$ approaches a weighted Dirac delta at some forward time then
\begin{align*}
(W^{H,f + \phi} + \psi) u \leq 0,
\end{align*}
generalizing Perelman's Harnack estimate (cf. Corollary \ref{c:GRFHarnack}).  Due to the presence of the conjugate dilaton flow solution, finding further geometric applications requires a more detailed understanding of the torsion $H$.

\subsection{Further applications to Ricci flow}

To conclude we observe some formal extensions of some key results in the analysis of Ricci flow to the setting of weighted scalar curvature.  First we note an extension of the definition of the Nash and Perelman entropies, adapted to weighted scalar curvature.  In Proposition \ref{p:weightednash} we show convexity of the Nash entropy, extending fundamental observations in \cite{HeinNaber}.
Going further, recall a key application of Perelman's differential Harnack estimate is the pseudolocality estimate \cite{Perelman1}, which roughly says that almost Euclidean regions will regularize for a short time.  The strength of this result is that `almost Euclidean,' is measured in a very weak sense, namely by a lower scalar curvature bound and an almost-Euclidean isoperimetric inequality.  Based on the generalized entropy monotonicity formulas above, we give an extension of this result, involving the weighted scalar curvature and isoperimetric inequality (Theorem \ref{t:RFgenpseudo}).  The proof follows Perelman's original proof until the final stages, where the entropy integrand is manipulated to exploit the weighted scalar curvature bound.  In the end we require a technical result relating the weighted isoperimetric inequality to the weighted log-Sobolev inequality (Theorem \ref{t:isotosob}), proved using the method of Steiner symmetrization.

\begin{thm} \label{t:RFgenpseudo} (cf. Theorem \ref{t:RFgenpseudo2}) For every $\ga > 0$ there exist $\gd, \ge > 0$ satisfying the following: Suppose we have a smooth pointed Ricci flow solution $(M, (p_0, 0), g_t)$ defined for $t \in [0, (\ge r_0)^2]$, such that each time slice is complete.  Suppose that there exists $\phi_0 \in C^{\infty}_0(M)$ such that
\begin{enumerate}
\item $R^{\phi_0}(p, 0) \geq - r_0^{-2}$ for any $p \in B_0(p_0, r_0)$,
\item $\phi_0(p, 0) \geq - \gd$ for any $p \in B_0(p_0, r_0)$,
\item The $\phi_0$-weighted isoperimetric constant of $B_0(p_0, r_0)$ satisfies $I_n^{\phi_0} \geq (1 - \gd ) c_n$, where $c_n$ denotes the Euclidean isoperimetric constant.
\end{enumerate}
Then $\brs{\Rm}(p,t) < \ga t^{-1} + (\ge r_0)^{-2}$ whenever $0 < t \leq (\ge r_0)^2$ and $d_t(p,p_0) \leq \ge r_0$.
\end{thm}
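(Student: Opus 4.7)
The plan is to argue by contradiction along the classical lines of Perelman's pseudolocality proof, substituting the weighted entropy monotonicity of Proposition \ref{p:GRFharnack} (specialized to Ricci flow, where $H \equiv 0$ and one may take $\psi \equiv 0$) for the standard one, and substituting the weighted log-Sobolev consequence of Theorem \ref{t:isotosob} for Perelman's Euclidean-type log-Sobolev input. Suppose the conclusion fails: for some $\ga > 0$ there are sequences $\gd_k, \ge_k \to 0$, pointed Ricci flows $(M_k, (p_0^k, 0), g^{(k)}_t)$, and weights $\phi_0^k$ satisfying (1)--(3) with $\gd = \gd_k$, yet violating the curvature bound at some space--time point in the allowed region. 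After parabolic rescaling one reduces to $r_0 = 1$.

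The next step is to invoke Perelman's point-picking lemma to produce a bad point $(q_k, t_k)$ with $t_k \leq \ge_k^2$ and $d_{t_k}(q_k, p_0^k)$ small at which the curvature concentrates in the standard quantitative sense. I evolve the dilaton by the forward heat equation $\square \phi_k = 0$ from $\phi_0^k$, and let $u_k = (4\pi \tau)^{-n/2} e^{-f_k}$ solve the weighted conjugate heat equation $\square^*_{\phi} u_k = 0$ as a fundamental solution based at $(q_k, t_k)$. Proposition \ref{p:GRFharnack} then yields monotonicity of the weighted entropy
\[
\NN_k(\tau) := \int_{M_k} \left( \tau R^{f_k + \phi_k} + (f_k + \phi_k) - n \right) u_k \, dV_{g^{(k)}_t},
\]
and the whole argument reduces to matching a lower bound at $\tau = t_k$ (i.e.\ $t = 0$) with an upper bound at small $\tau$ (i.e.\ $t = t_k$).

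The upper bound near $t_k$ is the standard Perelman concentration argument: the point-picking property forces the bad point $q_k$ to dominate the density of $u_k$ on scales below its curvature scale, yielding $\NN_k(\tau_k^*) \leq -\ga/2$ for a suitable small $\tau_k^* > 0$. The dilaton contribution is a genuinely lower-order perturbation, controlled by the minimum principle $\inf \phi_k(\cdot, t) \geq \inf \phi_0^k \geq -\gd_k$ together with standard heat-kernel decay of $\phi_k - \phi_0^k$ away from $\supp \phi_0^k$. The lower bound at $t = 0$ is where the new hypotheses enter: assumption (1) lets me replace the scalar curvature term by $R^{\phi_0^k}$ up to an $o(1)$ error; assumption (2) controls the added $\phi_0^k$ term pointwise on $B_0(p_0^k, 1)$; and assumption (3), via Theorem \ref{t:isotosob}, delivers a weighted logarithmic Sobolev inequality for $e^{-\phi_0^k} dV_{g_0^k}$ with constant within $o(1)$ of the sharp Euclidean one, which applied to the test function associated to $u_k(\cdot,0)$ gives $\NN_k(t_k) \geq -o(1)$.

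The main obstacle, as in \cite{Perelman1}, is that the conjugate heat kernel $u_k(\cdot, 0)$ need not be supported in $B_0(p_0^k, 1)$, where the hypotheses hold. The resolution is the same as in Perelman's proof: a spatial cutoff of $u_k(\cdot,0)$ to a slightly smaller ball alters $\NN_k(t_k)$ by only $o(1)$, using Gaussian upper bounds for the conjugate heat kernel together with the smallness of $t_k$ and $d_{t_k}(q_k, p_0^k)$ and monotonicity to push these bounds from $t_k$ back to $t = 0$; the presence of the weight $\phi_k$, bounded uniformly below by $-\gd_k$, at worst rescales these Gaussian bounds by $e^{\gd_k}$ and does not affect the $o(1)$ conclusion. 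Combining the two bounds produces the contradiction $-\ga/2 \geq -o(1)$ once $k$ is large, completing the proof.
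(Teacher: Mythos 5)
Your overall scheme (contradiction, point-picking, an entropy comparison between a small scale near the bad point and time zero, with Theorem \ref{t:isotosob} supplying the weighted log-Sobolev input) is the same as the paper's, but your key structural choice --- evolving the dilaton by the forward heat equation and basing everything on the weighted kernel $\square^*_{\phi} u_k = 0$ and Proposition \ref{p:GRFharnack} --- introduces gaps that the paper deliberately avoids. The paper never evolves $\phi_0$ at all: it runs Perelman's argument verbatim with the \emph{unweighted} conjugate heat kernel and unweighted entropy $W^{F}$ (quoting \cite{KL} Lemmas 31.1, 33.4 and the \S 34 cutoff/Harnack argument), arrives at $\gb(1-A^{-2}) \leq -\int W^F h u\, dV_g$ at $t=0$, and only then splits $F = f + \phi_0$, integrates by parts, and feeds hypotheses (1)--(3) into the time-zero integrand. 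Your route instead requires control of the evolved dilaton near the bad point at its curvature scale, and the control you invoke --- ``$\inf \phi_k(\cdot,t) \geq \inf \phi_0^k \geq -\gd_k$ by the minimum principle'' --- is not available: hypothesis (2) bounds $\phi_0$ below only on $B_0(p_0,r_0)$, and no hypothesis bounds $\phi_0$ above or bounds $\N \phi_0$ anywhere, so the claim that the dilaton is a ``lower-order perturbation'' in your analogue of the concentration estimate (\cite{KL} Lemma 33.4) is unjustified. The only clean repair is to note that by Lemma \ref{l:conjugateheat} item (5) the function $v = u_k e^{-\phi_k}$ solves the ordinary conjugate heat equation and the monotone quantity $\int W^{f+\phi} u\, e^{-\phi} dV_g$ (note your $\NN_k$ integrates against $dV_g$ rather than $e^{-\phi}dV_g$, which is not the quantity Proposition \ref{p:Ricciflowharnack} makes monotone) is literally Perelman's entropy of $v$ with $F = f + \phi$; but then evolving $\phi$ buys nothing and you have reproduced the paper's argument, in which the weight enters only at $t=0$.

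A second genuine gap is the mass-capture/cutoff step. You propose Gaussian upper bounds for the conjugate heat kernel, ``pushed from $t_k$ back to $t=0$,'' but under the stated hypotheses nothing controls the geometry or the curvature outside the almost-Euclidean ball, and such Gaussian bounds are not free on a general complete Ricci flow background. Perelman's proof, followed by the paper, avoids this by constructing a cutoff $h$ from a modified distance function propagated by the flow and using the Harnack estimate to discard the uncontrolled region; the paper's final manipulation then sets $\til{f} = f - \log h$, integrates by parts to produce $-\brs{\N \til{f}}^2$, uses $-R^{\phi_0} h \leq 1$ from (1), $\int_M -\phi_0 h u \leq \gd$ from (2), and the bounds on $h$, before contradicting (3) via Theorem \ref{t:isotosob}. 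Your sketch of the time-zero step is in the right spirit, but without this cutoff mechanism and the integration-by-parts reorganization the reduction to the weighted log-Sobolev inequality is not yet a proof.
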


\vskip 0.1in

\textbf{Acknowledgements:} This work emerged from work with V. Apostolov and Y. Ustinovskiy on scalar curvature in generalized K\"ahler geometry.  Companion work will appear on the role of scalar curvature in generalized K\"ahler geometry \cite{ASUScal}, and on the generalized K\"ahler Calabi-Yau conjecture \cite{AFSUCY}.  We further thank Ben Chow and Richard Bamler for helpful comments.

\section{Weighted scalar curvature monotonicity formulas} \label{s:GSC}

In this section we show several monotonicity formulas for weighted scalar curvatures and certain further generalizations.  First we prove Proposition \ref{p:scalarmon} of the introduction, and derive a rigidity result using the strong maximum principle.  Then we extend to a more general setting where $df$ is replaced by an arbitrary $1$-form, and then to the setting where the metric flow is coupled to the heat flow for differential forms of arbitrary degree.

\subsection{Weighted scalar curvature monotonicity and rigidity results}

To begin we formalize some definitions from the introduction.

\begin{defn} Given a smooth manifold $M$, a triple $(g, H, \phi)$ of a Riemannian metric, closed three-form $H$, and function $\phi$ determine a \emph{twisted Bakry-Emery curvature} 
\begin{align*}
\Rc^{H,\phi} = \Rc - \tfrac{1}{4} H^2 +\N^2 \phi - \tfrac{1}{2} \left( d^*_g H + i_{\N \phi} H \right).
\end{align*}
This data also determines a \emph{generalized scalar curvature}
\begin{align*}
R^{H,\phi} := R - \tfrac{1}{12} \brs{H}^2 + 2 \gD \phi - \brs{\N \phi}^2.
\end{align*}
\end{defn}

\begin{defn} Given $(M, g_t, H_t)$ a solution to generalized Ricci flow, a one-parameter family $\phi_t$ satisfies the \emph{dilaton flow} if
\begin{gather*}
\begin{split} 
\square \phi =&\ \tfrac{1}{6} \brs{H}^2.
\end{split}
\end{gather*}
\end{defn}

\begin{rmk} We separate the terminology of a solution to the dilaton flow associated to a solution of generalized Ricci flow to emphasize that the two flows are decoupled, and for instance the initial data for $\phi$ is arbitrary.  On the other hand, for convenience we will also refer to a triple $(g_t, H_t, \phi_t)$ as a solution of generalized Ricci flow, where a particular solution $\phi$ to the dilaton flow has been selected.
\end{rmk}

\begin{prop} (cf. Proposition \ref{p:scalarmon}) \label{p:scalarev} Let $(M^n, g_t, H_t, \phi_t)$ be a solution to generalized Ricci flow.  Then
\begin{align*}
\square R^{H,\phi} =&\ 2 \brs{\Rc^{H,\phi}}^2.
\end{align*}
\begin{proof} We compute the heat operator acting on each term of $R^{H,\phi}$ separately.  First, a standard computation (cf. \cite{GRFbook} Lemma 5.11) yields
\begin{align*}
\square R =&\ - \tfrac{1}{2} \gD \brs{H}^2 + \tfrac{1}{2} \divg \divg H^2 + 2 \IP{\Rc, \Rc - \tfrac{1}{4} H^2}.
\end{align*}
Also we have
\begin{align*}
\square \left( -\tfrac{1}{12} \brs{H}^2 \right) =&\ \IP{ \tfrac{1}{8} H^2 - \tfrac{1}{2} \Rc, H^2} - \tfrac{1}{6} \IP{\gD_d H, H} + \tfrac{1}{12} \gD \brs{H}^2.
\end{align*}
Next we have
\begin{align*}
\square \gD \phi =&\ \IP{ 2 \Rc - \tfrac{1}{2} H^2, \N^2 \phi} + \IP{ - \tfrac{1}{2} \divg H^2 + \tfrac{1}{4} d \brs{H}^2, d \phi}\\
&\ + \gD \left( \gD \phi + \tfrac{1}{6} \brs{H}^2 \right) - \gD \gD \phi\\
=&\ \IP{ 2 \Rc - \tfrac{1}{2} H^2, \N^2 \phi} + \IP{ - \tfrac{1}{2} \divg H^2 + \tfrac{1}{4} d \brs{H}^2, d \phi} + \tfrac{1}{6} \gD \brs{H}^2.
\end{align*}
Furthermore using the Bochner formula
\begin{align*}
\square \brs{\N \phi}^2 =&\ 2 \IP{\N \left( \gD \phi + \tfrac{1}{6} \brs{H}^2 \right), \N \phi} + \IP{2 \Rc - \tfrac{1}{2} H^2, d \phi \otimes d\phi} - \gD \brs{\N \phi}^2\\
=&\ - 2 \brs{\N^2 \phi}^2 + \tfrac{1}{3} \IP{ \N \brs{H}^2, \N \phi} - \tfrac{1}{2} \IP{H^2, d \phi \otimes d\phi}.
\end{align*}
Combining the above formulas and using the definition of $R^{H,\phi}$ yields
\begin{gather} \label{f:fscal10}
\begin{split}
\square R^{H,\phi} =&\ 2 \IP{\Rc, \Rc - \tfrac{1}{4} H^2} + \IP{\tfrac{1}{8} H^2 - \tfrac{1}{2} \Rc, H^2} + 2 \IP{ 2 \Rc - \tfrac{1}{2} H^2, \N^2 \phi} + 2 \brs{\N^2 \phi}^2\\
&\ + \IP{ - \divg H^2 + \tfrac{1}{2} d \brs{H}^2, d\phi} - \tfrac{1}{3} \IP{\N \brs{H}^2, \N \phi} + \tfrac{1}{2} \IP{ H^2, d\phi \otimes d\phi}\\
&\ - \tfrac{1}{12} \gD \brs{H}^2 + \tfrac{1}{2} \divg \divg H^2 - \tfrac{1}{6} \IP{\gD_d H, H}.
\end{split}
\end{gather}
We further recall the identity (cf. \cite{GRFbook} Lemma 3.19)
\begin{align*}
\divg H^2 = \tfrac{1}{6} \N \brs{H}^2 - \IP{d^*_g H, H},
\end{align*}
where $\IP{d^*_g H, H} = (d^*_g H)^{kl} H_{ikl}$.  This has the further consequence
\begin{align*}
\divg \divg H^2 = \tfrac{1}{6} \gD \brs{H}^2 + \tfrac{1}{3} \IP{ \gD_d H, H} + \brs{d^*_g H}^2.
\end{align*}
Plugging these into (\ref{f:fscal10}) yields
\begin{align*}
\square R^{H,\phi} =&\ 2 \brs{\Rc - \tfrac{1}{4} H^2 + \N^2 \phi}^2 + \tfrac{1}{2} \brs{d^*_g H + i_{\N \phi} H}^2 = 2 \brs{\Rc^{H,\phi}}^2,
\end{align*}
as claimed.
\end{proof}
\end{prop}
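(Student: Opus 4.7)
The plan is to apply $\square = \dt - \gD$ termwise to the four pieces of
\[
R^{H,\phi} = R - \tfrac{1}{12}\brs{H}^2 + 2\gD\phi - \brs{\N\phi}^2
\]
and then reorganize the result as a sum of squares.  First, $\square R$ comes from the standard variation-of-scalar-curvature formula applied to the metric flow $\dt g = -2\Rc + \tfrac{1}{2} H^2$, producing $2\IP{\Rc, \Rc - \tfrac{1}{4} H^2}$ together with $\divg\divg H^2$ and $\gD \brs{H}^2$ contributions.  Second, $\square(-\tfrac{1}{12}\brs{H}^2)$ uses $\dt H = \gD_d H$, which follows from $\dt b = -d^*_g H$ together with $dH = 0$, and the variation $\dt g$ acting through the three metric contractions in $\brs{H}^2$.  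Third, $\square(2\gD\phi)$ is handled by commuting $\dt$ with $\gD$ (producing terms $\IP{\Rc - \tfrac{1}{4} H^2, \N^2\phi}$ and $\IP{\divg H^2, d\phi}$ from the metric variation) and then substituting the dilaton equation $\square\phi = \tfrac{1}{6}\brs{H}^2$.  Fourth, $\square\brs{\N\phi}^2$ follows from the Bochner formula together with the dilaton equation, whose principal output is the term $-2\brs{\N^2\phi}^2$, responsible for the final $\brs{\N^2\phi}^2$ with the correct sign.

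Once these four expressions are summed, I would invoke the structural identities
\[
\divg H^2 = \tfrac{1}{6}\N\brs{H}^2 - \IP{d^*_g H, H}, \qquad \divg\divg H^2 = \tfrac{1}{6}\gD\brs{H}^2 + \tfrac{1}{3}\IP{\gD_d H, H} + \brs{d^*_g H}^2,
\]
to convert the $\divg\divg H^2$ and $\gD_d H$ terms into the clean invariant $\brs{d^*_g H}^2$ while simultaneously cancelling all $\gD \brs{H}^2$ contributions.  The remaining quadratic expression should then reorganize as
\[
2\brs{\Rc - \tfrac{1}{4} H^2 + \N^2\phi}^2 + \tfrac{1}{2}\brs{d^*_g H + i_{\N\phi}H}^2 = 2\brs{\Rc^{H,\phi}}^2,
\]
the splitting reflecting the symmetric versus antisymmetric pieces of the twisted Bakry-Emery tensor.

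The main obstacle is bookkeeping of cross terms.  Contributions such as $\IP{\Rc, H^2}$, $\IP{H^2, \N^2\phi}$, $\IP{H^2, d\phi\otimes d\phi}$, $\IP{\divg H^2, d\phi}$, $\IP{\N\brs{H}^2, \N\phi}$, and $\IP{d^*_g H, i_{\N\phi}H}$ must each emerge with precisely the coefficients required to complete the two target squares.  A safe verification strategy is to first expand $\brs{\Rc^{H,\phi}}^2$ directly, separating the symmetric tensor $\Rc - \tfrac{1}{4} H^2 + \N^2\phi$ from the antisymmetric tensor $\tfrac{1}{2}(d^*_g H + i_{\N\phi}H)$, tabulate each scalar invariant with its target coefficient, and then match term-by-term against the termwise heat-operator computation.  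No single step is analytically deep; the content is entirely algebraic, and the substantive work lies in tracking signs and combinatorial factors with discipline.
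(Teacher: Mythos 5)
Your plan is correct and follows essentially the same route as the paper's proof: a termwise application of $\square$ to $R$, $-\tfrac{1}{12}\brs{H}^2$, $2\gD\phi$ (via commuting $\dt$ with $\gD$ and the dilaton equation), and $\brs{\N\phi}^2$ (via Bochner), followed by the same two Bianchi-type identities for $\divg H^2$ and $\divg\divg H^2$ to cancel the $\gD\brs{H}^2$ terms and complete the two squares, whose sum equals $2\brs{\Rc^{H,\phi}}^2$ by the symmetric/antisymmetric splitting you describe. The only caveat is that you defer the cross-term bookkeeping, but your proposed verification strategy is exactly the computation the paper carries out.
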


\begin{cor} \label{c:scalarlb} Let $(M^n, g_t, H_t, \phi_t)$ be a solution to generalized Ricci flow on a compact manifold.  Then for any smooth existence time $t$ one has
\begin{align*}
\inf_{M \times \{t\}} R^{H,\phi} \geq&\ \inf_{M \times \{0\}} R^{H,\phi}.
\end{align*}
\begin{proof} This follows from the maximum principle applied to Proposition \ref{p:scalarev}
\end{proof}
\end{cor}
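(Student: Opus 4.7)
The plan is to apply the parabolic scalar maximum principle directly to the evolution equation in Proposition \ref{p:scalarev}. Since $\brs{\Rc^{H,\phi}}^2 \geq 0$ pointwise, that proposition asserts
$$\square R^{H,\phi} = 2 \brs{\Rc^{H,\phi}}^2 \geq 0,$$
so $R^{H,\phi}$ is a supersolution of the forward heat equation associated to the evolving metric $g_t$. The hypothesis that $M$ is compact and the flow is smooth on $[0,t]$ ensures $R^{H,\phi}$ is itself smooth on $M \times [0,t]$, so the spatial infimum $m(s) := \inf_{M \times \{s\}} R^{H,\phi}$ is finite and attained at some point $p_s \in M$ for each $s$.

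At any such interior minimum one has $\N R^{H,\phi}(p_s,s) = 0$ and $\gD R^{H,\phi}(p_s,s) \geq 0$, so the evolution equation yields
$$\frac{\partial}{\partial s} R^{H,\phi}(p_s,s) = \gD R^{H,\phi}(p_s,s) + 2 \brs{\Rc^{H,\phi}}^2(p_s,s) \geq 0.$$
From here the standard argument, either via comparison with barrier functions $m(0) - \epsilon s$ and letting $\epsilon \to 0^+$, or equivalently via the observation that $m$ is locally Lipschitz with everywhere nonnegative lower Dini derivative, shows that $m$ is nondecreasing on $[0,t]$. In particular $m(t) \geq m(0)$, which is the stated inequality.

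I do not anticipate a genuine obstacle. The only technical subtlety is that the minimizing point $p_s$ need not depend smoothly on $s$, but this is handled by the Dini-derivative/barrier formalism that is standard in Hamilton-type maximum principle arguments. The entire argument is a direct transcription of Hamilton's classical proof that $\inf R$ is nondecreasing under Ricci flow, with the role of the Ricci tensor replaced throughout by $\Rc^{H,\phi}$ and with Proposition \ref{p:scalarev} supplying the only nontrivial input.
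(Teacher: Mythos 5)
Your proposal is correct and is exactly the argument the paper intends: the paper's proof is simply "the maximum principle applied to Proposition \ref{p:scalarev}," and your write-up fills in the standard minimum-principle details (nonnegativity of $2\brs{\Rc^{H,\phi}}^2$, compactness, and the Dini-derivative/barrier handling of the infimum). No gaps or differences in approach to report.
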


\begin{cor} \label{c:rigidity2} (cf. Corollary \ref{c:rigidity}) Let $(M, g, H, \phi)$ satisfy $R^{H,\phi} \geq 0$.  If $M$ is compact, then either
\begin{enumerate}
\item The triple $(g, H, \phi)$ defines a generalized Ricci soliton and $R^{H,\phi} \equiv 0$
\item The manifold $M$ admits a triple $(\bar{g}, \bar{H}, \bar{\phi})$ such that $R^{\bar{H},\bar{\phi}} > 0$ everywhere.
\end{enumerate}
\begin{proof} This follows from the strong maximum principle applied to the evolution equation of Proposition \ref{p:scalarev}.
\end{proof}
\end{cor}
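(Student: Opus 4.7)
The plan is to evolve the given data $(g, H, \phi)$ by generalized Ricci flow coupled with the dilaton flow $\square \phi = \tfrac{1}{6}|H|^2$, and then apply the parabolic strong maximum principle to the evolution identity $\square R^{H,\phi} = 2|\Rc^{H,\phi}|^2$ from Proposition~\ref{p:scalarev}. By short-time existence for generalized Ricci flow on a compact manifold, together with the linear short-time existence of the dilaton flow (a forward heat equation with smooth source once $(g_t, H_t)$ is determined), there is a smooth solution $(g_t, H_t, \phi_t)$ on some interval $[0, T)$ realizing the prescribed initial data. Since $\square R^{H,\phi} \geq 0$ and $R^{H,\phi} \geq 0$ at $t = 0$, Corollary~\ref{c:scalarlb} propagates nonnegativity to all of $M \times [0, T)$.

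The dichotomy of the statement now arises from the strong maximum principle. In the first alternative, $R^{H,\phi} > 0$ everywhere on some time slice $M \times \{t_0\}$ with $t_0 \in (0, T)$, in which case the triple $(\bar{g}, \bar{H}, \bar{\phi}) := (g_{t_0}, H_{t_0}, \phi_{t_0})$ yields case (2) directly. In the other alternative, $R^{H,\phi}$ attains the value $0$ at some interior point $(x_0, t_0)$ with $t_0 > 0$; then the strong maximum principle applied to the supersolution $R^{H,\phi}$ of the heat equation forces $R^{H,\phi} \equiv 0$ on $M \times [0, t_0]$. Substituting this back into Proposition~\ref{p:scalarev} gives $|\Rc^{H,\phi}|^2 \equiv 0$ throughout, so at $t = 0$ the tensor $\Rc^{H,\phi}$ vanishes identically. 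Splitting into symmetric and antisymmetric parts then yields
\begin{align*}
\Rc - \tfrac{1}{4} H^2 + \N^2 \phi = 0, \qquad d^*_g H + i_{\N \phi} H = 0,
\end{align*}
which is precisely the steady generalized Ricci soliton system on the initial data, placing us in case (1).

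The only point that could reasonably be called an obstacle is verifying that the parabolic strong maximum principle applies in the form used. However, since $\square = \dt - \gD$ has neither a drift nor a zeroth-order term, the classical Nirenberg strong maximum principle for parabolic supersolutions on a compact manifold applies immediately, and the identification of $\Rc^{H,\phi} = 0$ with the steady generalized Ricci soliton equation is purely algebraic. Thus there is no genuine technical difficulty: essentially all the content of the corollary is carried by Proposition~\ref{p:scalarev} together with standard parabolic theory.
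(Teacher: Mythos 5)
Your proposal is correct and follows essentially the same route as the paper: the paper's proof is precisely the one-line invocation of the strong maximum principle applied to the evolution equation $\square R^{H,\phi} = 2\brs{\Rc^{H,\phi}}^2$ of Proposition \ref{p:scalarev}, and you have simply filled in the intended details (short-time existence, propagation of nonnegativity, the dichotomy, and the algebraic identification of $\Rc^{H,\phi} \equiv 0$ with the steady soliton system).
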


\subsection{One-form scalar curvature monotonicity} \label{s:oneform}

In the context of generalized geometry, the dilaton $\phi$, or more accurately its differential $d \phi$, plays the role of a \emph{divergence operator} \cite{GF19, GRFbook}, which is necessary to define the generalized Ricci tensor.  A natural class of divergence operators are defined by a $1$-form which need not even be closed.  Next we extend the results above to this more general setting.

\begin{defn} Given a smooth manifold $M$, a triple $(g, H, \ga)$ of a Riemannian metric, closed three-form $H$, and one-form $\ga$ determine a \emph{generalized scalar curvature}
\begin{align*}
R^{H,\ga} = R - \tfrac{1}{12} \brs{H}^2 + 2 d^* \ga - \brs{\ga}^2.
\end{align*}
\end{defn}

\begin{defn} Given $(M, g_t, H_t)$. solution to generalized Ricci flow, a one-parameter family $\ga_t$ satisfies the \emph{dilaton flow} if
\begin{gather} \label{f:1formdilatonflow}
\begin{split} 
\dt \ga =&\ \gD_d \ga + \tfrac{1}{6} d \brs{H}^2.
\end{split}
\end{gather}
\end{defn}

\begin{prop} \label{p:oneformscalarev} Let $(M^n, g_t, H_t, \ga_t)$ be a solution to generalized Ricci flow.  Then
\begin{align*}
\square R^{H,\ga} =&\ 2 \brs{\Rc - \tfrac{1}{4} H^2 + L_{\tfrac{1}{2} \ga^{\sharp}} g}^2 + \tfrac{1}{2} \brs{d \ga}^2 + \tfrac{1}{2} \brs{d^*_g H + i_{\ga^{\sharp}} H}^2.
\end{align*}
\begin{proof} The proof is nearly identical to that of Proposition \ref{p:scalarev}.  We first note
\begin{align*}
\square d^*\ga  =&\ \IP{ 2 \Rc - \tfrac{1}{2} H^2, \N \ga} + \IP{ - \tfrac{1}{2} \divg H^2 + \tfrac{1}{4} d \brs{H}^2, \ga}\\
&\ + d^* \left( \gD_d \ga + \tfrac{1}{6} d \brs{H}^2 \right) - \gD d^* \ga\\
=&\ \IP{ 2 \Rc - \tfrac{1}{2} H^2, \N \ga} + \IP{ - \tfrac{1}{2} \divg H^2 + \tfrac{1}{4} d \brs{H}^2, \ga} + \tfrac{1}{6} \gD \brs{H}^2.
\end{align*}
Furthermore using the Bochner formula
\begin{align*}
\square \brs{\ga}^2 =&\ 2 \IP{ \gD_d \ga + \tfrac{1}{6} d \brs{H}^2, \ga} + \IP{2 \Rc - \tfrac{1}{2} H^2, \ga \otimes \ga} - \gD \brs{\ga}^2\\
=&\ - 2 \brs{\N \ga}^2 + \tfrac{1}{3} \IP{ d \brs{H}^2, \ga} - \tfrac{1}{2} \IP{H^2, \ga \otimes \ga}.
\end{align*}
Using these and arguing as in Proposition \ref{p:scalarev} we obtain
\begin{align*}
\square R^{H,\ga} =&\ 2 \brs{\Rc - \tfrac{1}{4} H^2 + \N \ga}^2 + \tfrac{1}{2} \brs{d^*_g H + i_{\ga^{\sharp}} H}^2\\
=&\ 2 \brs{\Rc - \tfrac{1}{4} H^2 + L_{\tfrac{1}{2} \ga^{\sharp}} g}^2 + \tfrac{1}{2} \brs{d \ga}^2 + \tfrac{1}{2} \brs{d^*_g H + i_{\ga^{\sharp}} H}^2,
\end{align*}
as claimed.
\end{proof}
\end{prop}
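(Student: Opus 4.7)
The plan is to mirror the proof of Proposition \ref{p:scalarev} termwise, applying the forward heat operator $\square$ to
\begin{align*}
R^{H,\ga} = R - \tfrac{1}{12}\brs{H}^2 + 2 d^*\ga - \brs{\ga}^2.
\end{align*}
The contributions from $\square R$ and $\square\left( -\tfrac{1}{12}\brs{H}^2 \right)$ are already computed in Proposition \ref{p:scalarev} and carry over verbatim, so the entire content of the proof lies in computing $\square d^*\ga$ and $\square\brs{\ga}^2$. I expect these to parallel the $\square\gD\phi$ and $\square\brs{\N\phi}^2$ computations with $\N^2\phi$ systematically replaced by $\N\ga$.

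For $\square d^*\ga$, I would commute $\dt$ past $d^*$ using the metric variation $\dt g^{ij} = 2\Rc^{ij} - \tfrac{1}{2}(H^2)^{ij}$ and the induced Christoffel variation; these produce the terms $\IP{2\Rc - \tfrac{1}{2}H^2, \N\ga}$ and $\IP{-\tfrac{1}{2}\divg H^2 + \tfrac{1}{4}d\brs{H}^2, \ga}$. The remaining $d^*\dt\ga$ contribution is evaluated by substituting the one-form dilaton flow (\ref{f:1formdilatonflow}); since $d^*$ commutes with the Hodge Laplacian, the $d^*\gD_d\ga$ term combines with the $-\gD d^*\ga$ coming from $\square = \dt - \gD$ to telescope, leaving a clean residue $\tfrac{1}{6}\gD\brs{H}^2$. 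For $\square\brs{\ga}^2$, I would use the Bochner formula together with the metric variation and the dilaton flow to obtain an expression of the same structural form as the $\square\brs{\N\phi}^2$ step in Proposition \ref{p:scalarev}, namely $-2\brs{\N\ga}^2 + \tfrac{1}{3}\IP{d\brs{H}^2, \ga} - \tfrac{1}{2}\IP{H^2, \ga\otimes\ga}$.

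Assembling the four pieces and invoking the two divergence identities for $H^2$ (namely $\divg H^2 = \tfrac{1}{6}\N\brs{H}^2 - \IP{d^*_g H, H}$ and its consequence for $\divg\divg H^2$), exactly as in the proof of Proposition \ref{p:scalarev}, the pure $H$ contributions telescope into $\tfrac{1}{2}\brs{d^*_g H + i_{\ga^\sharp}H}^2$, while the remaining curvature and derivative-of-$\ga$ terms complete a square into $2\brs{\Rc - \tfrac{1}{4}H^2 + \N\ga}^2$. This is the intermediate form displayed in the proof of the proposition.

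The one genuinely new algebraic step is the final decomposition. Since $\Rc - \tfrac{1}{4}H^2$ is a symmetric $2$-tensor, and since
\begin{align*}
\N\ga = L_{\tfrac{1}{2}\ga^\sharp} g + \tfrac{1}{2} d\ga
\end{align*}
is the orthogonal decomposition of $\N\ga$ into symmetric and antisymmetric components, the identity $\brs{S + A}^2 = \brs{S}^2 + \brs{A}^2$ (valid for symmetric $S$ and antisymmetric $A$) yields
\begin{align*}
2\brs{\Rc - \tfrac{1}{4}H^2 + \N\ga}^2 = 2\brs{\Rc - \tfrac{1}{4}H^2 + L_{\tfrac{1}{2}\ga^\sharp}g}^2 + \tfrac{1}{2}\brs{d\ga}^2,
\end{align*}
giving the claimed formula. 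I expect the main obstacle to be bookkeeping rather than conceptual: one must verify that the symmetric/antisymmetric splitting of $\N\ga$ has not been implicitly used in any of the earlier steps (where it would collapse back to $\N^2\phi$ in the $\ga = d\phi$ case), so that the new $\tfrac{1}{2}\brs{d\ga}^2$ term arises only at the final decomposition and not silently cancelled earlier.
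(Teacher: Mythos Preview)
Your proposal is correct and follows essentially the same approach as the paper: compute $\square d^*\ga$ and $\square\brs{\ga}^2$ as the direct analogues of $\square\gD\phi$ and $\square\brs{\N\phi}^2$, assemble with the unchanged $R$ and $\brs{H}^2$ terms to reach $2\brs{\Rc - \tfrac{1}{4}H^2 + \N\ga}^2 + \tfrac{1}{2}\brs{d^*_g H + i_{\ga^\sharp}H}^2$, and then split $\N\ga$ into its symmetric and antisymmetric parts. Your explicit justification of that last step via the orthogonality of symmetric and antisymmetric tensors is in fact more detailed than the paper, which simply asserts the final equality.
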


\begin{rmk} An a priori lower bound for $R^{H, \ga}$ as in Corollary \ref{c:scalarlb} and a rigidity result as in Corollary \ref{c:rigidity} are immediate consequences.
\end{rmk}

\subsection{Further generalizations of Ricci flow} \label{s:GGRF}

The results of this paper extend to a more general class of geometric evolution equations coupled to differential forms.  In particular, let $H = \bigoplus_{k=1}^n H_k$ denote a closed section of $\Lambda^* T^*M$, where the subscript indicates the degree of the differential form, and consider the system of equations
\begin{gather} \label{f:GGRF}
\dt g = -2 \Rc + \tfrac{1}{2} H^2,  \qquad \dt H = \gD_d H,
\end{gather}
where as before $H^2(X, Y) = \IP{ i_X H, i_Y H}$.  We note that the constant factor of $\tfrac{1}{2}$ on the term $H^2$ may seem arbitrary, but as the coupled PDE for $H$ is linear, this constant can be tuned to any positive value, independently for each value of $k$ if desired.  This system of equations obeys the same fundamental regularity of properties as generalized Ricci flow, and has an interpretation as Ricci flow on more general Courant algebroids (cf. \cite{GRFbook}).  For this setup we define the generalized scalar curvature
\begin{align*}
R^{H,\phi} =&\ R - \tfrac{1}{4} \sum_{k=1}^n \tfrac{1}{k} \brs{H_k}^2 + 2 \gD \phi - \brs{\N \phi}^2.
\end{align*}
Also we attach a dilaton flow of the form
\begin{align} \label{f:Gdilatonflow}
\square \phi =&\ \tfrac{1}{4} \sum_{k=1}^n \tfrac{k-1}{k} \brs{H_k}^2.
\end{align}

\begin{prop} Suppose $(g_t, H_t)$ is a solution of (\ref{f:GGRF}) and $\phi_t$ is a solution of (\ref{f:Gdilatonflow}).  Then
\begin{align*}
\square R^{H,\phi} =&\ 2 \brs{\Rc - \tfrac{1}{4} H^2 + \N^2 \phi}^2 + \tfrac{1}{2} \brs{d^*_g H + i_{\N \phi} H}^2.
\end{align*}
\end{prop}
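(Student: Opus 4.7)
The approach is to replicate the termwise calculation of Proposition \ref{p:scalarev}, exploiting the fact that both the metric flow $\dt g = -2\Rc + \tfrac{1}{2}\sum_k H_k^2$ and the form flow $\dt H = \gD_d H$ decouple across degrees. Each $H_k$ can therefore be handled independently of the others, and the weighted sums defining $R^{H,\phi}$ and the dilaton source are designed so that the degree-$k$ contributions package correctly into the final pair of squares.

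First I would extend the formula $\square R = -\tfrac{1}{2}\gD \brs{H}^2 + \tfrac{1}{2}\divg\divg H^2 + 2\IP{\Rc, \Rc - \tfrac{1}{4}H^2}$ from (\cite{GRFbook} Lemma 5.11) to the generalized flow, which amounts to replacing $H^2$ by $\sum_k H_k^2$ throughout and applying the same Bianchi/contracted-second-Bianchi manipulations. Next, for each $k$ I would compute $\square \brs{H_k}^2$ using $\dt H_k = \gD_d H_k$ together with the Bochner--Weitzenbock identity for $k$-forms; the key algebraic input is the $k$-form generalization of (\cite{GRFbook} Lemma 3.19), namely $\divg H_k^2 = c_k \N \brs{H_k}^2 - \IP{d^*_g H_k, H_k}$ with a constant $c_k$ determined by closedness of $H_k$, together with its divergence identity for $\divg\divg H_k^2$. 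Finally, $\square \gD \phi$ and $\square \brs{\N\phi}^2$ are computed exactly as in Proposition \ref{p:scalarev}, except that the dilaton source now contributes $\tfrac{1}{4}\sum_k \tfrac{k-1}{k}\brs{H_k}^2$ in place of $\tfrac{1}{6}\brs{H}^2$ whenever one commutes $\square$ past $\N$ or $\gD$.

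Combining these inputs, the $\IP{\Rc, H_k^2}$, $\IP{\N^2 \phi, H_k^2}$, and $\IP{H_j^2, H_k^2}$ cross-terms should organize into $2\brs{\Rc - \tfrac{1}{4}H^2 + \N^2\phi}^2$, while the first-order torsion contributions $\brs{d^*_g H_k}^2$, $\brs{i_{\N\phi} H_k}^2$, and their pairings across degrees should organize into $\tfrac{1}{2}\brs{d^*_g H + i_{\N\phi} H}^2$. The specific coefficients $\tfrac{1}{k}$ in $R^{H,\phi}$ and $\tfrac{k-1}{k}$ in the dilaton source are precisely those required for this reorganization to succeed degree by degree; indeed one can view them as reverse-engineered from this very identity.

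The main obstacle is the detailed bookkeeping of $k$-dependent constants through the Bochner--Weitzenbock identity for $k$-forms and the generalized divergence identity. Because the right-hand side of the stated identity is a sum of two squares with no residual $k$-dependent terms, every coefficient in the definitions of $R^{H,\phi}$ and the dilaton flow must match exactly. This careful calibration is the essential combinatorial content of the proposition, but is a routine (if tedious) computation once the template of Proposition \ref{p:scalarev} has been put in place.
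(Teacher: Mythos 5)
Your proposal matches the paper's proof in essence: the paper likewise reduces the statement to the degree-$k$ Bianchi identities $\divg H_k^2 = -\IP{d^*_g H_k, H_k} + \tfrac{1}{2k} d \brs{H_k}^2$ and $\divg \divg H_k^2 = \tfrac{1}{2k}\gD\brs{H_k}^2 + \tfrac{1}{k}\IP{\gD_d H_k, H_k} + \brs{d^*_g H_k}^2$, and then declares the rest a straightforward modification of the termwise computation in Proposition \ref{p:scalarev}, exactly as you outline. The only difference is that the paper pins down your constant $c_k$ as $\tfrac{1}{2k}$ explicitly, which your bookkeeping step would recover.
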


\begin{proof} For a closed differential form $H_k$ of degree $k$ one has the Bianchi identities
\begin{align*}
\divg H_k^2 =&\ - \IP{d^* H_k, H_k} + \frac{1}{2k} d \brs{H_k}^2\\
\divg \divg H_k^2 =&\ \tfrac{1}{2k} \gD \brs{H_k}^2 + \tfrac{1}{k} \IP{\gD_d H, H} + \brs{d^*_g H_k}^2.
\end{align*}
Using these a straightforward modification of the proof of Proposition \ref{p:scalarev} gives the result.
\end{proof}

\begin{rmk} An a priori lower bound for $R^{H, \phi}$ as in Corollary \ref{c:scalarlb} and a rigidity result as in Corollary \ref{c:rigidity} are immediate consequences.
\end{rmk}

\section{Weighted energy and entropy formulas} \label{s:harnack}

In \cite{Perelman1} Perelman introduced differential inequalities for the weighted scalar curvature, coupled to a solution of the conjugate heat equation.  These key estimates complement the a priori scalar curvature lower bound, and underpin the proofs of $\gk$-noncollapsing and pseudolocality for Ricci flow.  In this section, complementary to our a priori lower bound for the generalized scalar curvature, we generalize these estimates to the case of generalized Ricci flow, by further coupling to a solution of the dilaton flow.  The key point is to treat the auxiliary solution to the dilaton flow as a shift in Perelman's dilaton $f$, in particular constructing $f$ as a solution to the \emph{$\phi$-weighted} conjugate heat equation.  This leads to a family of differential inequalities with now \emph{two} auxiliary functional parameters $\phi$ and $f$.

\subsection{Weighted conjugate heat operators}

In this subsection we define the weighted conjugate heat operator, then show some elementary properties of this equation and its relation to the classic conjugate heat equation.

\begin{defn} Let $(M, g_t, H_t, \phi_t)$ denote a solution to generalized Ricci flow.  Define the \emph{conjugate heat operator}
\begin{align*}
\square^* = - \dt - \gD + R - \tfrac{1}{4} \brs{H}^2.
\end{align*}
Also, we define the \emph{weighted conjugate heat operator}
\begin{align*}
\square^*_{\phi} = - \dt - \gD + 2 \N \phi + R^{H,\phi}.
\end{align*}
\end{defn}

\begin{lemma} \label{l:conjugateheat} Let $(M, g_t, H_t, \phi_t)$ denote a solution to generalized Ricci flow.  Given $u_t, v_t$ smooth functions we have
\begin{enumerate}
\item $\frac{d}{dt} \int u v dV_g = \int_M \left(v \square u - u \square^* v \right) dV_g$.
\item $\frac{d}{dt} \int u v e^{-\phi} dV_g = \int_M \left(v \square u - u \square^*_{\phi} v \right) e^{-\phi} dV_g$.
\item A solution to $\square^* u = 0$ preserves mass against $dV_g$, i.e. $\frac{d}{dt} \int_M u dV_g = 0$.
\item A solution to $\square^*_{\phi} u = 0$ preserves mass against $e^{-\phi} dV_g$, i.e. $\frac{d}{dt} \int_M u e^{-\phi} dV_g = 0$.
\item One has $\square^* (u e^{-\phi}) = \left(\square^*_{\phi} u \right) e^{-\phi}$.
\end{enumerate}
\begin{proof} Items (1) and (3) are elementary consequences of the generalized Ricci flow equations.  To show item (2) we compute
\begin{align*}
\frac{d}{dt}& \int_M u v e^{-\phi} dV_{g}\\
=&\ \int_M \left[ v \dt u + u \dt v + u v \left( - R +\tfrac{1}{4} \brs{H}^2 - \gD \phi - \tfrac{1}{6} \brs{H}^2 \right) \right] e^{-\phi} dV_{g}\\
=&\ \int_M \left[ v \square u + u \left( e^{\phi} \gD ( e^{-\phi} v) + \dt v + v(- R +\tfrac{1}{12} \brs{H}^2 - \gD \phi) \right) \right] e^{-\phi} dV_g\\
=&\ \int_M \left[ v \square u + u \left( \dt v + \gD v - 2 \IP{\N v, \N \phi} + v \left( - R + \tfrac{1}{12} \brs{H}^2 - 2 \gD \phi + \brs{\N \phi}^2 \right) \right)\right] e^{-\phi} dV_g\\
=&\ \int_M \left[ v \square u + u  \left( \dt + \gD - 2 \N \phi - R^{H,\phi} \right) v \right] e^{-\phi} dV_g\\
=&\ \int_M \left[ v \square u - u \square^*_{\phi} v \right] e^{-\phi} dV_g,
\end{align*}
as claimed.  Item (4) is an elementary consequence of item (2).  To show item (5) we directly compute
\begin{align*}
\square^* \left( u e^{-\phi} \right) =&\ (\square^* u) e^{-\phi} - 2 \IP{\N u, \N e^{-\phi}} - u \left( \dt + \gD \right) e^{-\phi}\\
=&\ \left[ \left( - \dt - \gD + 2 \N \phi + R - \tfrac{1}{4} \brs{H}^2 \right) u \right] e^{-\phi} + u \left( \dt \phi + \gD \phi - \brs{\N \phi}^2 \right) e^{-\phi}\\
=&\ \left[ \left( - \dt - \gD + 2 \N \phi + R - \tfrac{1}{12} \brs{H}^2 + 2 \gD \phi - \brs{\N \phi}^2 \right) u \right] e^{-\phi}\\
=&\ \left( \square^*_{\phi} u \right) e^{-\phi},
\end{align*}
as claimed.
\end{proof}
\end{lemma}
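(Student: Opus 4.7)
The plan is to prove items (1) and (5) by direct computation, and then deduce (2) as a formal consequence, with (3) and (4) following as immediate specializations to $v\equiv 1$.

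For item (1), I would differentiate under the integral sign. The key input is the variation formula for the volume element under generalized Ricci flow: since $\partial_t g_{ij} = -2R_{ij} + \tfrac{1}{2}H^2_{ij}$, taking the trace gives $\partial_t dV_g = \tfrac{1}{2} g^{ij}(\partial_t g_{ij})\,dV_g = (-R + \tfrac{1}{4}|H|^2)\,dV_g$. Then I would distribute $\partial_t$ across $uv\,dV_g$, use the identity $\int v\Delta u = \int u \Delta v$ via integration by parts (on closed $M$, or assuming sufficient decay), and collect. The zeroth-order reaction terms assemble exactly into the coefficient $R - \tfrac{1}{4}|H|^2$ of $\square^*$, which is why this particular form of the conjugate heat operator is natural for generalized Ricci flow.

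For item (5), I would expand $\square^*(ue^{-\phi})$ using the product rule. The time derivative produces $-(\partial_t u)e^{-\phi} + u(\partial_t \phi)e^{-\phi}$, and $\Delta(ue^{-\phi}) = (\Delta u)e^{-\phi} - 2\langle \nabla u, \nabla\phi\rangle e^{-\phi} + u(|\nabla\phi|^2 - \Delta\phi)e^{-\phi}$. Collecting and factoring out $e^{-\phi}$, the derivative part yields the operator $-\partial_t - \Delta + 2\nabla\phi$ applied to $u$, while the scalar coefficient is $R - \tfrac{1}{4}|H|^2 + \partial_t\phi + \Delta\phi - |\nabla\phi|^2$. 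Substituting the dilaton flow $\partial_t\phi = \Delta\phi + \tfrac{1}{6}|H|^2$ converts this scalar into precisely $R - \tfrac{1}{12}|H|^2 + 2\Delta\phi - |\nabla\phi|^2 = R^{H,\phi}$, matching $\square^*_\phi$.

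With items (1) and (5) in hand, item (2) follows by applying (1) with $v$ replaced by $\tilde v = ve^{-\phi}$, then using (5) to rewrite $\square^*(\tilde v) = (\square^*_\phi v)e^{-\phi}$; the left-hand side is already $\int uv e^{-\phi}\,dV_g$. Items (3) and (4) come from setting $v\equiv 1$ in (1) and (2), respectively, and using $\square 1 = 0$.

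The main obstacle will be the bookkeeping in the coefficient reconciliation for (5): the interplay between the $-\tfrac{1}{4}|H|^2$ term in $\square^*$, the $\tfrac{1}{6}|H|^2$ contribution coming from the dilaton flow, and the $-\tfrac{1}{12}|H|^2$ appearing in $R^{H,\phi}$ must match exactly. This is not accidental: it precisely explains and is forced by the choice of coefficient $\tfrac{1}{6}$ in the dilaton flow $\square\phi = \tfrac{1}{6}|H|^2$. Once this identity is verified, the remaining pieces are essentially mechanical applications of the product rule and integration by parts.
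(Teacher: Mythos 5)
Your proposal is correct, and it reorganizes the argument in a way that differs mildly but genuinely from the paper. The paper proves item (2) by a direct computation: it differentiates the weighted measure $e^{-\phi}\,dV_g$ along the flow (which is where the dilaton equation $\square\phi = \tfrac{1}{6}\brs{H}^2$ enters, producing the $-\gD\phi - \tfrac{1}{6}\brs{H}^2$ terms), integrates by parts against $e^{-\phi}$, and assembles the result into $\square^*_{\phi}$; item (5) is then verified separately by the same product-rule computation you describe. Your route instead proves only (1) and (5) by direct computation and obtains (2) formally by applying (1) to the pair $(u, v e^{-\phi})$ and invoking (5) to rewrite $\square^*(v e^{-\phi}) = (\square^*_{\phi} v)e^{-\phi}$; this is legitimate since $v e^{-\phi}$ is again a smooth time-dependent function, and it localizes the use of the dilaton flow to the single identity (5), avoiding a second weighted-measure computation. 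What the paper's direct proof of (2) buys is an explicit display of how the weighted measure evolves, which is reused implicitly later (e.g.\ in Proposition \ref{p:weightednash}); what your route buys is economy and a cleaner logical dependence, with (5) exposed as the structural reason the coefficient $\tfrac{1}{6}$ is forced. One small slip: for items (3) and (4) you say to set $v \equiv 1$ and use $\square 1 = 0$, but in the identities of (1) and (2) the second-slot function is the one hit by $\square^*$ (and $\square^* 1 = R - \tfrac{1}{4}\brs{H}^2 \neq 0$); you should instead put the constant function in the first slot, where $\square$ acts, and let the conjugate-heat solution occupy the second slot (or else keep $v\equiv 1$ and add one more integration by parts). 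This is a labeling matter, not a gap.
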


\subsection{Energy density monotonicity} \label{ss:SHE}

\begin{prop} \label{p:steadyharnackev2} (cf. Proposition \ref{p:steadyharnackev}) Let $(M, g_t, H_t, \phi_t)$ denote a solution to generalized Ricci flow, and suppose $u = e^{-f}$ is a solution of $\square^*_{\phi} u = 0$.
Then
\begin{align*}
\square^*_{\phi} \left( R^{H,f+ \phi} u \right) =&\ - 2 \brs{\Rc^{H,f+\phi}}^2 u.
\end{align*}
\begin{proof} In \cite{GRFbook} Theorem 6.12 it is shown that if $v = e^{-F}$ is a solution of the conjugate heat equation $\square^* v = 0$, then
\begin{align*}
\square^* \left( R^{H,F} v \right) =&\ - 2 \brs{\Rc^{H,F}} v.
\end{align*}
It follows from Lemma \ref{l:conjugateheat} item (5) that $v = u e^{-\phi} = e^{-f-\phi}$ is a solution of $\square^* v = 0$.  Furthermore again using Lemma \ref{l:conjugateheat} item (5) we compute
\begin{align*}
\left( \square^*_{\phi} \left( R^{f+\phi} u\right) \right) e^{-\phi} = \square^* \left( R^{f+\phi} v \right) = -2 \brs{\Rc^{H,f+\phi}} u e^{-\phi},
\end{align*}
giving the claim.
\end{proof}
\end{prop}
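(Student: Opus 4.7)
My plan is to reduce the weighted statement to the already-established unweighted analogue from \cite{GRFbook} Theorem 6.12, which asserts that if $v = e^{-F}$ solves the classical conjugate heat equation $\square^* v = 0$ along generalized Ricci flow, then
\begin{align*}
\square^*(R^{H,F} v) = -2|\Rc^{H,F}|^2 v.
\end{align*}
The bridge between the weighted and unweighted conjugate heat operators is the intertwining identity $\square^*(w e^{-\phi}) = (\square^*_\phi w) e^{-\phi}$ established in Lemma \ref{l:conjugateheat}(5).

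First, I would apply this intertwining to the hypothesis $\square^*_\phi u = 0$: it follows immediately that $v := u e^{-\phi} = e^{-(f+\phi)}$ satisfies $\square^* v = 0$. Thus the formula from \cite{GRFbook} Theorem 6.12 applies with the choice $F = f + \phi$, yielding
\begin{align*}
\square^*(R^{H,f+\phi}\, v) = -2|\Rc^{H,f+\phi}|^2\, v.
\end{align*}
Next, I would apply Lemma \ref{l:conjugateheat}(5) a second time, now with the function $R^{H,f+\phi} u$ in place of the generic $w$, to obtain
\begin{align*}
\square^*\bigl(R^{H,f+\phi} u \cdot e^{-\phi}\bigr) = \bigl(\square^*_\phi(R^{H,f+\phi} u)\bigr) e^{-\phi}.
\end{align*}
Because $v = u e^{-\phi}$, the left-hand side agrees with what was computed in the previous display, and similarly the right-hand side of that display equals $-2|\Rc^{H,f+\phi}|^2 u \cdot e^{-\phi}$. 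Canceling the nonvanishing factor $e^{-\phi}$ yields the stated identity.

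The main obstacle, such as it is, is purely bookkeeping: one must verify that the same potential $F = f + \phi$ appears consistently in the weighted scalar curvature and twisted Bakry-Emery tensor on both sides, and that no stray cross-terms arise from the dual role of $\phi$ as both a shift of the Perelman potential and as the weight of the measure. Both facts are built into the definitions of $R^{H,\phi}$ and $\Rc^{H,\phi}$ and into the intertwining of Lemma \ref{l:conjugateheat}(5); once these are in hand the reduction is essentially formal, with all of the analytic content absorbed into the known unweighted formula.
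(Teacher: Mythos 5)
Your argument is exactly the paper's proof: apply Lemma \ref{l:conjugateheat}(5) to convert $\square^*_{\phi} u = 0$ into $\square^* v = 0$ for $v = u e^{-\phi}$, invoke \cite{GRFbook} Theorem 6.12 with $F = f + \phi$, and then use the intertwining identity again to transfer the conclusion back to the weighted operator. The reasoning is correct and matches the paper's route step for step (in fact your statement of the unweighted formula correctly includes the square on $\brs{\Rc^{H,F}}^2$, which the paper's proof omits as a typo).
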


\begin{cor} \label{c:steadyHarnack} Let $(M^n, g_t, H_t, \phi_t)$ denote a solution to generalized Ricci flow.  Suppose $u = e^{-f}$ is a solution of $\square^*_{\phi} u = 0$.  Then
\begin{align*}
\sup_{M \times \{t\}} R^{H,f + \phi} \geq \sup_{M \times \{0\}} R^{H,f + \phi}.
\end{align*}
\begin{proof} It follows from Proposition \ref{p:steadyharnackev} that $R^{f + \phi}$ is a subsolution of a backwards heat-type equation, therefore by the maximum principle its supremum is nonincreasing as a function of $-t$, therefore nondecreasing as a function of $t$.
\end{proof}
\end{cor}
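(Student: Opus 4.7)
The plan is to combine Proposition \ref{p:steadyharnackev2} with a scalar maximum principle. The proposition gives the identity $\square^*_\phi(R^{H,f+\phi}\,u) = -2|\Rc^{H,f+\phi}|^2 u \leq 0$, with $u = e^{-f} > 0$ everywhere. My first move is to extract from this an evolution inequality for $R^{H,f+\phi}$ alone, so that the maximum principle can be applied directly to the scalar quantity whose supremum we are tracking.

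To do this I would expand $\square^*_\phi(R^{H,f+\phi}\,u)$ using the product rule for $\partial_t$, $\Delta$, and $\langle\nabla\phi,\nabla(\cdot)\rangle$. Writing $R := R^{H,f+\phi}$ for brevity, the zeroth-order potential $R^{H,\phi}$ in $\square^*_\phi$ contributes only the term $R\,(\square^*_\phi u) = 0$, so after cancellation one is left with
\[
\square^*_\phi(R\,u) \;=\; u\bigl(-\partial_t R - \Delta R + 2\langle\nabla\phi,\nabla R\rangle\bigr) - 2\langle\nabla R,\nabla u\rangle.
\]
Since $\nabla u = -u\nabla f$, the cross term becomes $2u\langle\nabla R,\nabla f\rangle$. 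Dividing by $u > 0$ and equating with the right-hand side of Proposition \ref{p:steadyharnackev2} gives
\[
\partial_t R \;=\; -\Delta R + 2\langle\nabla(f+\phi),\nabla R\rangle + 2|\Rc^{H,f+\phi}|^2.
\]

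At a spatial maximum of $R$ one has $\nabla R = 0$ and $-\Delta R \geq 0$, while the reaction term $2|\Rc^{H,f+\phi}|^2$ is pointwise nonnegative, so $\partial_t R \geq 0$ at the maximum. The scalar parabolic maximum principle then yields that $\sup_{M\times\{t\}} R^{H,f+\phi}$ is nondecreasing in $t$, as claimed. The main subtlety is noncompactness: the hypotheses of the corollary do not assume $M$ is compact, and in general $f$ and $\phi$ need not be bounded. In the compact case the argument is routine; in the general case one would need to couple the above inequality with an Omori--Yau type argument compatible with the drift $2\nabla(f+\phi)$, or restrict to a setting in which the supremum is attained and the relevant growth conditions on $\phi$, $f$, and the curvature are known to hold.
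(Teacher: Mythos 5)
Your proposal is correct and takes essentially the same route as the paper: you expand $\square^*_{\phi}(R^{H,f+\phi}u)$, divide out $u>0$, and thereby exhibit $R^{H,f+\phi}$ as a subsolution of a backwards drift-heat equation, which is exactly how the paper deduces the monotonicity of the supremum from Proposition \ref{p:steadyharnackev2} via the maximum principle. Your caveat about noncompactness is reasonable; the paper's one-line proof likewise tacitly assumes the setting (as in Corollary \ref{c:scalarlb}) in which the maximum principle applies.
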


\subsection{Gradient property revisited and steady solitons} \label{ss:gf}

The results in the previous subsection underpin a generalization of the gradient flow interpretation of generalized Ricci flow.
To begin we recall the usual gradient flow interpretation.  Define
\begin{align*}
\FF(g,H,f) =&\ \int_M \left( \brs{\N f}^2 + R - \tfrac{1}{12} \brs{H}^2 \right) e^{-f} dV_g,\\
\gl(g,H) =&\ \inf_{\{f\ |\ \int_M e^{-f} dV_g = 1 \}} \FF(g,H,f).
\end{align*}
Following \cite{Perelman1}, it was shown in \cite{OSW} it is shown that generalized Ricci flow is the gradient flow of $\gl$.  Furthermore, there is a general monotonicity formula for $\FF$ once $u = e^{-f}$ is imposed to solve the conjugate heat equation along the flow.  By explicitly including the dilaton shift $\phi$ we get an infinite dimensional family of eigenvalues $\gl$ for which generalized Ricci flow is the gradient flow, following Proposition \ref{p:steadyharnackev}.

\begin{defn} Given $(M^n, g, H, \phi)$, we define
\begin{align*}
\gl(g,H,\phi) = \inf_{ \{f\ |\ \int_M e^{-f} e^{-\phi} dV_g = 1 \}} \FF(g,H,f + \phi)
\end{align*}
\end{defn}

\begin{lemma} The quantity $\gl(g, H, \phi)$ is lowest eigenvalue of the operator
\begin{align*}
\mathcal L = - 4 \gD + 4 \N \phi + R^{H,\phi}.
\end{align*}
\begin{proof} Integrating by parts we can express, for $w = e^{-\frac{f}{2}}$,
\begin{align*}
\FF(g,H,f + \phi) = \int_M \left( 4 \brs{\N w}^2 + R^{H,\phi} w^2 \right) e^{-\phi} dV_g,
\end{align*}
and the result follows from a standard argument
\end{proof}
\end{lemma}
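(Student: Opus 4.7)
The plan is to rewrite the functional $\mathcal{F}(g,H,f+\phi)$ in the variable $w = e^{-f/2}$, at which point $\gl(g,H,\phi)$ becomes the Rayleigh quotient for $\mathcal{L}$ with respect to the weighted measure $e^{-\phi} dV_g$, and the conclusion is standard spectral theory.

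First, I substitute $f = -2\log w$, so $|\N f|^2 = 4|\N w|^2/w^2$ and $e^{-f} = w^2$. Expanding $|\N(f+\phi)|^2 = |\N f|^2 + 2\IP{\N f, \N\phi} + |\N\phi|^2$ and multiplying by $e^{-(f+\phi)} = w^2 e^{-\phi}$, the integrand of $\mathcal{F}(g,H,f+\phi)$ becomes
\begin{align*}
\left( 4|\N w|^2 - 4 w \IP{\N w, \N\phi} + |\N\phi|^2 w^2 + \bigl(R - \tfrac{1}{12}|H|^2\bigr) w^2 \right) e^{-\phi}.
\end{align*}
The cross term $-4w\IP{\N w,\N\phi}e^{-\phi} = -2\IP{\N w^2,\N\phi}e^{-\phi}$ is then handled by integration by parts against the weighted measure: using $\divg(\N\phi\, e^{-\phi}) = (\gD\phi - |\N\phi|^2)e^{-\phi}$ on a closed manifold (or with appropriate decay on an open one), one obtains
\begin{align*}
\int_M -2\IP{\N w^2,\N\phi} e^{-\phi}\, dV_g = \int_M w^2 \bigl(2\gD\phi - 2|\N\phi|^2\bigr) e^{-\phi}\, dV_g.
\end{align*}
Combining with the remaining $|\N\phi|^2 w^2$ term gathers exactly the combination $2\gD\phi - |\N\phi|^2 + R - \tfrac{1}{12}|H|^2 = R^{H,\phi}$, yielding the claimed identity
\begin{align*}
\mathcal{F}(g,H,f+\phi) = \int_M \bigl(4|\N w|^2 + R^{H,\phi} w^2\bigr) e^{-\phi}\, dV_g.
\end{align*}

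Next, the normalization constraint $\int_M e^{-f}e^{-\phi} dV_g = 1$ becomes simply $\int_M w^2 e^{-\phi} dV_g = 1$. Writing $\Delta_\phi = \gD - \IP{\N\phi,\N \cdot}$ for the drift Laplacian, the identity $\int_M |\N w|^2 e^{-\phi} dV_g = -\int_M w \gD_\phi w \, e^{-\phi} dV_g$ (which is the natural integration by parts for the weighted measure) lets us write
\begin{align*}
\int_M \bigl(4|\N w|^2 + R^{H,\phi} w^2\bigr) e^{-\phi} dV_g = \int_M w \bigl(\mathcal{L} w\bigr) e^{-\phi} dV_g,
\end{align*}
with $\mathcal{L} = -4\gD_\phi + R^{H,\phi} = -4\gD + 4\N\phi + R^{H,\phi}$, matching the operator in the statement.

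Thus $\gl(g,H,\phi)$ is the infimum of the Rayleigh quotient $\langle w, \mathcal{L} w\rangle_{L^2(e^{-\phi}dV_g)} / \|w\|^2_{L^2(e^{-\phi}dV_g)}$. Since $\mathcal{L}$ is symmetric on $L^2(e^{-\phi} dV_g)$ with lower-order potential $R^{H,\phi}$, this infimum equals its lowest eigenvalue by the standard min-max principle; on a non-compact manifold one restricts to the form domain and argues the infimum is attained or characterizes it as the bottom of the spectrum. The only genuinely substantive step is the integration-by-parts computation rewriting $\mathcal{F}$ in terms of $w$; everything else is a routine appeal to the variational characterization of the bottom of the spectrum of a weighted Schr\"odinger operator.
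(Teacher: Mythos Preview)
Your proof is correct and follows essentially the same approach as the paper's: you make the substitution $w=e^{-f/2}$, integrate by parts to obtain the identity $\FF(g,H,f+\phi)=\int_M(4|\N w|^2+R^{H,\phi}w^2)e^{-\phi}dV_g$, and then invoke the variational characterization of the lowest eigenvalue. The paper's proof is simply a more condensed version of yours, stating the integrated identity and deferring to ``a standard argument'' for the Rayleigh quotient conclusion.
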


\begin{prop} \label{p:gradientflow} Generalized Ricci flow is the gradient flow of $\gl(g, H, \phi)$.
\end{prop}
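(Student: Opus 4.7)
The observation I would start from is that $\gl(g, H, \phi)$ is in fact independent of $\phi$. The substitution $F = f + \phi$ is a bijection between the constraint sets $\{f : \int_M e^{-f} e^{-\phi} dV_g = 1\}$ and $\{F : \int_M e^{-F} dV_g = 1\}$, under which $\FF(g, H, f + \phi)$ is carried identically to $\FF(g, H, F)$. Consequently
\[
\gl(g, H, \phi) = \inf_{\{F : \int_M e^{-F} dV_g = 1\}} \FF(g, H, F),
\]
the standard Perelman--OSW eigenvalue, and the proposition reduces immediately to the gradient-flow theorem of \cite{OSW}.

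For completeness, and to illustrate the intrinsic role of the weighted conjugate heat operator developed in this section, I would also give a direct proof of the monotonicity of $\gl$ along generalized Ricci flow. Fix $T > 0$ and let $f_T$ attain the infimum in $\gl(g_T, H_T, \phi_T)$. Solve $\square^*_{\phi} u_t = 0$ backward in time from terminal data $u_T = e^{-f_T}$, and set $f_t = - \log u_t$. By Lemma \ref{l:conjugateheat}(4), the normalization $\int_M u_t e^{-\phi_t} dV_{g_t} = 1$ is preserved along the flow. The standard integration by parts gives
\[
\FF(g_t, H_t, f_t + \phi_t) = \int_M R^{H, f_t + \phi_t} u_t \, e^{-\phi_t} dV_{g_t},
\]
and applying Proposition \ref{p:steadyharnackev2} together with Lemma \ref{l:conjugateheat}(2) on the pair $(1, R^{H, f+\phi} u)$ yields
\[
\tfrac{d}{dt} \FF(g_t, H_t, f_t + \phi_t) = 2 \int_M \brs{\Rc^{H, f_t + \phi_t}}^2 u_t \, e^{-\phi_t} dV_{g_t} \geq 0.
\]
Combined with $\gl(g_0, H_0, \phi_0) \leq \FF(g_0, H_0, f_0 + \phi_0)$ and $\gl(g_T, H_T, \phi_T) = \FF(g_T, H_T, f_T + \phi_T)$, this establishes monotonicity.

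The gradient-flow identification then follows from the envelope theorem: at the minimizer $f$, one computes the first variation of $\FF$ in the directions $(\delta g, \delta b)$ while holding the measure $e^{-(f + \phi)} dV_g$ fixed, reproducing the variational calculation of \cite{OSW} and identifying the steepest-ascent directions as $-2\Rc + \tfrac{1}{2} H^2$ on $g$ and $-d^*_g H$ on $b$, i.e. the right-hand side of generalized Ricci flow. I anticipate the only subtle point is verifying that the natural $L^2$-type pairing used to define the gradient produces no additional $\phi$-dependent terms; the reduction to the $\phi$-independent OSW functional in the first paragraph renders this automatic.
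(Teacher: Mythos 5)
Your proposal is correct, and its opening observation is a genuine (and clarifying) shortcut relative to the route the paper has in mind. The paper states Proposition \ref{p:gradientflow} without a written proof; the intended argument, signalled by the surrounding text, is to run the Perelman--OSW scheme directly in the weighted setting: view $\gl(g,H,\phi)$ as the lowest eigenvalue of $\mathcal L = -4\gD + 4\N\phi + R^{H,\phi}$ on $L^2(e^{-\phi}dV_g)$ and derive monotonicity and the gradient structure from Proposition \ref{p:steadyharnackev2} together with Lemma \ref{l:conjugateheat} --- which is exactly what your second and third paragraphs do, and your implementation checks out: the identity $\FF(g,H,f+\phi)=\int_M R^{H,f+\phi}u\,e^{-\phi}dV_g$ follows by the usual integration by parts, and applying Lemma \ref{l:conjugateheat}(2) with the pair $(1, R^{H,f+\phi}u)$ plus Proposition \ref{p:steadyharnackev2} gives the stated derivative (you implicitly use positivity of $u_t$, standard for the backward problem on a compact manifold). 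What you add is the substitution $F=f+\phi$, which carries the weighted constraint set and functional exactly onto the unweighted ones, so $\gl(g,H,\phi)=\gl(g,H)$ as a number; equivalently, $\mathcal L$ is unitarily conjugate, via multiplication by $e^{\phi/2}$, to $-4\gD + R - \tfrac{1}{12}\brs{H}^2$. This reduces the proposition at once to \cite{OSW} and, as you note, disposes of any worry about $\phi$-dependent terms in the pairing, since the minimizing measure $e^{-(f+\phi)}dV_g$ is the unweighted minimizing measure. The only friction with the paper is rhetorical rather than mathematical: the ``infinite dimensional family of eigenvalues'' is really one invariant presented in infinitely many ways, and what genuinely depends on $\phi$ is the minimizer, the weighted first-variation/monotonicity formula, and the attendant heat-kernel and entropy machinery of Section \ref{s:harnack} --- which is precisely what the paper's weighted derivation (and your second paragraph) buys, while your reduction buys economy by outsourcing the gradient-flow identification entirely to \cite{OSW}.
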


Critical points for $\gl$ are steady generalized Ricci solitons, and recent constructions and classification results for these objects have appeared in \cite{ASU3, Streetssolitons, SU2, SU1}, including examples on compact manifolds.  The next proposition shows that, on such a steady soliton, the dilaton flow (suitably normalized) converges to the relevant soliton function $f$.

\begin{prop} \label{p:dilconvergence} Suppose $(M^n, g, H, f)$ is a steady generalized Ricci soliton, i.e. $\Rc^{H,f} \equiv 0$, and let $\gl = \gl(g,H)$.  Given $\phi_0 \in C^{\infty}(M)$ a smooth function, the solution to the gauge-fixed normalized dilaton flow
\begin{align*}
\left( \square + \N f \right) \phi =&\ \tfrac{1}{6} \brs{H}^2 - \gl
\end{align*}
with initial condition $\phi_0$ exists on $[0,\infty)$ and $\lim_{t \to \infty} \phi_t = f + c$ for some $c \in \mathbb R$.
\begin{proof}
We recall the basic identities for a soliton:
\begin{align*}
R - \tfrac{1}{4} \brs{H}^2 + \gD f =&\ 0,\\
R^{H,f} =&\ \gl.
\end{align*}
The first follows by tracing $\Rc^{H,f} = 0$ and the second follows by first observing that $R^f$ is constant by a Bianchi identity (cf. \cite{GRFbook} Proposition 4.33), then observing by integration against $f$, suitably normalized, that this constant must be $\gl(g,H)$.  Using these identities we compute
\begin{align*}
(\square + \N f) (\phi - f) =&\ \tfrac{1}{6} \brs{H}^2 - \gl + \gD f - \brs{\N f}^2\\
=&\ \tfrac{1}{6} \brs{H}^2 - \left( R - \tfrac{1}{12} \brs{H}^2 + 2 \gD f - \brs{\N f}^2 \right) + \gD f - \brs{\N f}^2\\
=&\ - R + \tfrac{1}{4} \brs{H}^2 - \gD f\\
=&\ 0.
\end{align*}
As $\square + \N f$ is a strictly parabolic operator with no constant term, it follows from standard results that $\phi_t$ exists for all time, and moreover that $\phi - f$ approaches a constant.
\end{proof}
\end{prop}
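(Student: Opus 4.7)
The plan is to reduce the inhomogeneous evolution for $\phi$ to a homogeneous weighted heat equation for $\psi := \phi - f$, and then appeal to standard spectral theory on the compact soliton background.

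First I would record two algebraic consequences of the steady soliton condition $\Rc^{H,f} \equiv 0$. Tracing this tensor identity with $g$ yields
$$R - \tfrac{1}{4}\brs{H}^2 + \gD f = 0,$$
while the Bianchi-type identity cited as \cite{GRFbook} Proposition 4.33 shows $R^{H,f}$ is spatially constant on the soliton; the normalization built into $\gl(g,H)$ (integrating against a suitably chosen test weight built from $f$) then pins the constant down as $R^{H,f} = \gl$.

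Next I would substitute $\phi = \psi + f$ into the dilaton equation and compute $(\square + \N f)\psi$ explicitly. Since $f$ is $t$-independent, $\square f = -\gD f$ and $\N f \cdot \N f = \brs{\N f}^2$, so
$$(\square + \N f)(\phi - f) = \tfrac{1}{6}\brs{H}^2 - \gl + \gD f - \brs{\N f}^2.$$
Using $R^{H,f} = \gl$ to solve for $\brs{\N f}^2$ in terms of $R$, $\brs{H}^2$, $\gD f$, and then applying the trace identity to eliminate $R + \gD f$, this right-hand side collapses to $\tfrac{1}{4}\brs{H}^2 - R - \gD f = 0$. Hence $\psi$ satisfies the homogeneous weighted heat equation $(\square + \N f)\psi = 0$ with initial datum $\psi_0 = \phi_0 - f$.

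Finally I would observe that the drift operator $L := -\gD + \N f$ is self-adjoint and non-negative with respect to the soliton measure $e^{-f}\,dV_g$ on compact $M$, with kernel equal to the constants. Standard parabolic theory then gives long-time existence of $\psi_t$, and spectral decomposition of $L$ against the weighted $L^2$ inner product gives exponential convergence $\psi_t \to c$ for
$$c = \frac{\int_M (\phi_0 - f)\, e^{-f}\,dV_g}{\int_M e^{-f}\,dV_g}.$$
Translating back, $\phi_t \to f + c$ as $t \to \infty$. The principal obstacle is the algebraic cancellation producing $(\square + \N f)(\phi - f) = 0$; once that is in hand, the long-time existence and convergence reduce to a routine spectral fact for the weighted Laplacian on the compact soliton.
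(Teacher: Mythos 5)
Your proposal is correct and follows essentially the same route as the paper: the same two soliton identities (trace of $\Rc^{H,f}=0$ and $R^{H,f}=\gl$) are used to show the cancellation $(\square + \N f)(\phi - f) = 0$, after which long-time existence and convergence to a constant follow. Your final step merely makes explicit what the paper leaves as ``standard results,'' namely the spectral decomposition of the drift Laplacian $-\gD + \N f$ with respect to $e^{-f}\,dV_g$ on the compact soliton, which as a bonus identifies the limiting constant $c$ as the $e^{-f}$-weighted mean of $\phi_0 - f$.
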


\subsection{Entropy density monotonicity}

\begin{defn} 
Given a smooth manifold $M$, a triple $(g, H, F)$ of a Riemannian metric, closed three-form $H$, function $F$ and $\tau > 0$ determine a \emph{generalized entropy density}
\begin{align*}
W^{H,F} = \tau R^{H,F} + F - n.
\end{align*}
\end{defn}

In the setting of Ricci flow the quantity $W^F$ is Perelman's entropy density, which satisfies a key monotonicity property.  To obtain a monotone entropy quantity for generalized Ricci flow we require a further functional parameter, which can be used to measure the concentration of $H$ at a given point.
\begin{defn} Let $(M^n, g_t, H_t, \phi_t)$ denote a solution to generalized Ricci flow, and suppose $u$ is a solution of $\square^*_{\phi} u = 0$.  A function $\psi$ is a solution of the \emph{conjugate dilaton flow} if
\begin{align*}
\square^*_{\phi} (\psi u) = - \tfrac{1}{6} \brs{H}^2 u.
\end{align*}
\end{defn}

\noindent The lemma below shows that the flow is well-posed for arbitrary smooth terminal data.

\begin{lemma} \label{l:conjdil} Given $(M^n, g_t, H_t, \phi_t)$ a solution of generalized Ricci flow, suppose $u = e^{-f}$ is a positive solution of $\square^*_{\phi} u = 0$.  Then $\psi$ is a solution of the conjugate dilaton flow if and only if
\begin{align*}
\left( - \dt - \gD + 2 \N (f + \phi) \right) \psi = - \tfrac{1}{6} \brs{H}^2.
\end{align*}
\begin{proof} Given a smooth function $\psi$ we formally compute
\begin{align*}
\square^*_{\phi} (\psi u) =&\ \left( \left( -\dt - \gD + 2 \N \phi \right) \psi \right) u + \psi \square^*_{\phi} u - 2 \IP{\N \psi, \N u}\\
=&\ \left( \left( -\dt - \gD + 2 \N (f + \phi) \right) \psi \right) u.
\end{align*}
Using that $u$ is positive, the result follows.
\end{proof}
\end{lemma}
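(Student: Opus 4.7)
The plan is to verify the identity by a direct product-rule computation, applying $\square^*_\phi$ to $\psi u$ and exploiting the hypothesis $\square^*_\phi u = 0$ to eliminate the dominant block, then using $u = e^{-f}$ to convert the leftover first-order cross term into an $\N f$ contribution.

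First I would expand
\begin{align*}
\square^*_\phi(\psi u) = \left(-\dt - \gD + 2\N\phi + R^{H,\phi}\right)(\psi u)
\end{align*}
using the Leibniz rule for $\dt$, the identity $\gD(\psi u) = (\gD\psi)u + 2\IP{\N\psi,\N u} + \psi(\gD u)$, and the derivation property of $\N\phi$. Grouping terms yields the formal identity
\begin{align*}
\square^*_\phi(\psi u) = u\left(-\dt - \gD + 2\N\phi\right)\psi \;+\; \psi\,\square^*_\phi u \;-\; 2\IP{\N\psi,\N u}.
\end{align*}

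Second, the hypothesis $\square^*_\phi u = 0$ kills the middle term. For the cross term, $u = e^{-f}$ gives $\N u = -u\,\N f$, so $-2\IP{\N\psi,\N u} = 2u\IP{\N f,\N\psi}$. Combining with the $2\N\phi$ piece produces
\begin{align*}
\square^*_\phi(\psi u) = u\left(-\dt - \gD + 2\N(f+\phi)\right)\psi.
\end{align*}
Since $u > 0$, the condition $\square^*_\phi(\psi u) = -\tfrac{1}{6}\brs{H}^2 u$ is therefore equivalent to
\begin{align*}
\left(-\dt - \gD + 2\N(f+\phi)\right)\psi = -\tfrac{1}{6}\brs{H}^2,
\end{align*}
which is precisely the claim.

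There is no real obstacle: the content is elementary and parallels the calculation already performed in item (2) of Lemma \ref{l:conjugateheat}. The only point that deserves attention is the sign bookkeeping in passing between $\N u$ and $\N f$ via $u = e^{-f}$, and confirming that the $R^{H,\phi}$ reaction term is fully absorbed into $\psi\,\square^*_\phi u$ and so vanishes by assumption, leaving a first-order transport equation for $\psi$.
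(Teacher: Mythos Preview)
Your proposal is correct and follows exactly the same route as the paper: expand $\square^*_\phi(\psi u)$ by the product rule, use $\square^*_\phi u = 0$ and $\N u = -u\N f$ to simplify, then divide by $u>0$. The paper compresses these steps into a two-line display, but the content is identical.
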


\begin{prop} \label{p:Ricciflowharnack} (cf. Proposition \ref{p:GRFharnack}) Let $(M, g_t, H_t, \phi_t)$ denote a solution to generalized Ricci flow, and suppose $u = (4 \pi \tau)^{-\frac{n}{2}} e^{-f}$ is a solution of $\square^*_{\phi} u = 0$, where $\tau = T - t$ for some fixed $T$.  Furthermore suppose $\psi$ is a solution of the conjugate dilaton flow.  Then
\begin{align*}
\square^*_{\phi} \left[ (W^{H,f+\phi} + \psi) u\right] =&\ - 2 \tau \brs{\Rc^{H,f+\phi} - \frac{1}{2 \tau} g}^2 u.
\end{align*}
\begin{proof} First observe, using $\square^*_{\phi} u = 0$,
\begin{gather} \label{f:shrinkerfev}
\begin{split}
0 =&\ \square^*_{\phi} u = \left( - \dt - \gD + 2 \N \phi + R^{H,\phi} \right) \left( (4 \pi \tau)^{-\frac{n}{2}} e^{-f} \right)\\
=&\ \left( - \frac{n}{2 \tau} + \frac{\del f}{\del t} + \gD f - \brs{\N f}^2 - 2 \IP{\N \phi, \N f} + R^{H,\phi} \right) \left( (4 \pi \tau)^{-\frac{n}{2}} e^{-f} \right).
\end{split}
\end{gather}
Now we compute using Proposition \ref{p:steadyharnackev},
\begin{align*}
& \left(- \dt - \gD + 2 \N \phi \right) W^{H,f+\phi}\\
& \qquad = R^{H,f + \phi} + \tau \left(- \dt - \gD + 2 \N \phi\right) R^{H,f + \phi} + \left(-\dt - \gD + 2 \N \phi \right) \left(f + \phi \right)\\
& \qquad = \left( 2 \gD (f + \phi ) - \brs{\N f + \phi}^2 + R - \tfrac{1}{12} \brs{H}^2 \right)\\
&\ \qquad \qquad + \tau \left( - 2 \brs{\Rc - \tfrac{1}{4} H^2 + \N^2 (f + \phi)}^2 - \tfrac{1}{2} \brs{d^*_g H + i_{\N (f + \phi)} H}^2 - 2 \IP{\N R^{H,f+\phi}, \N f} \right)\\
&\ \qquad \qquad + \left( -\frac{n}{2 \tau} - \brs{\N f}^2 + 2 \gD \phi - \brs{\N \phi}^2 + R -\tfrac{1}{12} \brs{H}^2 \right) + \left( - 2 \gD \phi + 2 \brs{\N \phi}^2 - \tfrac{1}{6} \brs{H}^2 \right)\\
& \qquad = - 2 \tau \brs{\Rc -\tfrac{1}{4} H^2 + \N^2(f + \phi) - \frac{1}{2 \tau} g}^2 - \tfrac{\tau}{2} \brs{d^*_g H + i_{\N (f + \phi)} H}^2  + \tfrac{1}{6} \brs{H}^2\\
&\ \qquad \qquad - 2 \IP{\N (\tau R^{f + \phi}), \N f} - 2 \brs{\N f}^2 - 2 \IP{\N f, \N \phi}\\
& \qquad = - 2 \tau \brs{\Rc^{H,f+\phi} - \frac{1}{2 \tau} g}^2 + \tfrac{1}{6} \brs{H}^2 - 2 \IP{\N W^{H,f+\phi} , \N f}.
\end{align*}
We then obtain
\begin{align*}
\square^*_{\phi} \left( W^{H,f+\phi} u \right) =&\ \left( - \dt - \gD + 2 \N \phi + R^{H,\phi} \right) (W^{H,f+\phi} u)\\
=&\ \left( \left(- \dt - \gD + 2 \N \phi \right) W^{H,f+\phi} \right) u + W^{H,f+\phi} \square^*_{\phi} u - 2 \IP{\N W^{H,f+\phi}, \N u}\\
=&\ - 2 \tau \brs{\Rc^{H,f+\phi} - \frac{1}{2 \tau} g}^2 u  + \tfrac{1}{6} \brs{H}^2 u,
\end{align*}
and the result follows.
\end{proof}
\end{prop}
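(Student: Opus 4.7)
The plan is to reduce the identity to two ingredients already in hand: the weighted energy density identity of Proposition \ref{p:steadyharnackev2}, and the $f$-evolution encoded by $\square^*_{\phi} u = 0$. Then a Perelman-style completion of squares will produce the shrinker quantity $\Rc^{H,f+\phi} - \tfrac{1}{2\tau} g$, and the conjugate dilaton flow for $\psi$ is set up precisely to absorb a leftover $\tfrac{1}{6}\brs{H}^2 u$ term.

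First I would use the product rule
\[
\square^*_{\phi}(F u) = \bigl((-\dt - \gD + 2 \N \phi) F\bigr) u + F \,\square^*_{\phi} u - 2 \IP{\N F, \N u},
\]
together with $\square^*_{\phi} u = 0$ and $\N u = -u \N f$, to reduce $\square^*_{\phi}((W^{H,f+\phi} + \psi) u)$ to computing $(-\dt - \gD + 2 \N(\phi + f)) W^{H,f+\phi}$, with the $\psi$ contribution handled separately. Expanding $W^{H,f+\phi} = \tau R^{H,f+\phi} + (f+\phi) - n$, the derivative $-\dt \tau = 1$ contributes a boundary $R^{H,f+\phi}$ term, while Proposition \ref{p:steadyharnackev2} applied to $R^{H,f+\phi} u$ delivers the main contribution $-2\tau\brs{\Rc^{H,f+\phi}}^2 u$.

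Next I would extract the equation for $f$ from $\square^*_{\phi} u = 0$ with $u = (4\pi\tau)^{-n/2} e^{-f}$: differentiating the Gaussian prefactor yields the extra $-\tfrac{n}{2\tau}$, so that
\[
\dt f + \gD f - \brs{\N f}^2 - 2 \IP{\N \phi, \N f} + R^{H,\phi} = \tfrac{n}{2\tau}.
\]
This substitutes directly into $(-\dt - \gD + 2 \N \phi)(f+\phi)$ after invoking the dilaton flow $\square \phi = \tfrac{1}{6}\brs{H}^2$. Observing that $d^*_g H$ and $i_{\N(f+\phi)} H$ are $2$-forms and hence trace-free, one has $\tr \Rc^{H,f+\phi} = R - \tfrac{1}{4}\brs{H}^2 + \gD(f+\phi)$, so that the algebraic identity
\[
-2\tau\brs{\Rc^{H,f+\phi}}^2 + 2 \tr \Rc^{H,f+\phi} - \tfrac{n}{2\tau} = -2\tau\brs{\Rc^{H,f+\phi} - \tfrac{1}{2\tau} g}^2
\]
furnishes the completion-of-squares recipe.

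Collecting everything, $\square^*_{\phi}(W^{H,f+\phi} u)$ should reduce to $-2\tau\brs{\Rc^{H,f+\phi} - \tfrac{1}{2\tau} g}^2 u$ plus a residual equal to exactly $\tfrac{1}{6}\brs{H}^2 u$, at which point the defining property $\square^*_{\phi}(\psi u) = -\tfrac{1}{6}\brs{H}^2 u$ of the conjugate dilaton flow precisely cancels this residual and yields the stated identity. The main obstacle is the sheer bookkeeping: numerous $\brs{H}^2$, Laplacian, and inner-product cross terms arise from the weighted heat operator, the Gaussian factor in $u$, and the dilaton flow for $\phi$, so the cleanest strategy is to resist premature simplification and carefully track every contribution until the coefficients reorganize into the square and the isolated $\tfrac{1}{6}\brs{H}^2 u$ remainder that motivates the very definition of the conjugate dilaton flow.
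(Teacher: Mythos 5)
Your proposal is correct and follows essentially the same route as the paper: extract the $f$-equation from $\square^*_{\phi} u = 0$, use the energy density identity of Proposition \ref{p:steadyharnackev2} for the evolution of $R^{H,f+\phi}$, use the dilaton flow for $\phi$, complete the square to produce $\Rc^{H,f+\phi} - \tfrac{1}{2\tau} g$ with a leftover $\tfrac{1}{6}\brs{H}^2 u$, and cancel that leftover by the defining property of the conjugate dilaton flow. The only difference is cosmetic bookkeeping (you fold the $-2\IP{\N \cdot, \N u}$ cross term into the drift operator $-\dt - \gD + 2\N(\phi+f)$ from the start and phrase the square-completion via $\tr \Rc^{H,f+\phi}$, which is consistent with the paper's conventions), and when carried out the coefficients do reorganize exactly as you predict.
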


\subsection{Perelman's Harnack estimate}

To prove the generalization of Perelman's Harnack estimate we require some technical estimates for the conjugate heat kernel along solutions to generalized Ricci flow, which follow from straightforward modifications of results in \cite{ChauPseudo} (cf. also \cite{NiLYH, Zhang1}).  We fix $(M^n, g_t, H_t, \phi_t)$ a solution of generalized Ricci flow on $M \times [0,T]$, and let $Z(x,t,y,s)$ denote the fundamental solution of $\square u = 0$.  We fix $p \in M$ and define $u(x,t) = Z(p,T,x,t)$.  It follows that $u$ is a solution of the conjugate heat equation $\square^* u = 0$.  We define $f$ by $u =  (4 \pi \tau)^{- \frac{n}{2}} e^{-f}$ as above, where $\tau = T-  t$.  Furthermore, we fix $0 < t_0 < T$ and let $h_{t_0} \geq 0$ denote a smooth compactly supported function, and let $h_t$ denote the solution to $\square h = 0$ on $[t_0, T]$ with initial condition $h_{t_0}$.  We then claim the following fundamental characteristics for the entropy density $W^{H,f}$.

\begin{prop} \label{p:heatkernel} (cf. \cite{ChauPseudo} Theorem 7.1) Given the setup above,
\begin{enumerate}
\item For any $t_0 < t < T$ one has that $h W^{H,f} \in L^1(M, g_t)$.
\item For any $t_0 < t_1 < t_2 < T$ one has
\begin{align*}
\int_M h W^{H,f} dV_{g_{t_1}} \leq \int_M h W^{H,f} dV_{g_{t_2}} + \int_{t_1}^{t_2} \int_M \tfrac{1}{6} h \brs{H}^2 u dV_{g_t} dt.
\end{align*}
\item One has
\begin{align*}
\limsup_{t \to T^-} \int_M h W^{H,f} dV_{g_t} \leq 0.
\end{align*}
\end{enumerate}
\end{prop}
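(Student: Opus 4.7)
The strategy follows Chau--Tam--Yu \cite{ChauPseudo} (ultimately after Perelman), adapted to accommodate the torsion $H$; I read the integrand in items (1)--(3) as $h\, W^{H,f} u$, with the heat kernel $u$ implicit on the left-hand sides. The engine is the pointwise identity
\begin{equation*}
\square^*\bigl( W^{H,f} u \bigr) = -2\tau \brs{\Rc^{H,f} - \tfrac{1}{2\tau} g}^2 u + \tfrac{1}{6} \brs{H}^2 u,
\end{equation*}
which is obtained by a direct computation mirroring the proof of Proposition \ref{p:Ricciflowharnack} with $\phi \equiv 0$: the $\tfrac{1}{6}\brs{H}^2 u$ correction term precisely reflects the failure of $\psi \equiv 0$ to solve the conjugate dilaton equation. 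In particular $\square^*(W^{H,f} u) \leq \tfrac{1}{6} \brs{H}^2 u$.

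For item (1), I would combine upper and lower Gaussian bounds on $Z(p, T, \cdot, \cdot)$ together with gradient bounds on $\log Z$, obtained by adapting the arguments of \cite{ChauPseudo, Zhang1}; the extra reaction $-\tfrac{1}{4}\brs{H}^2$ appearing in $\square^*$ is of lower order and is controlled by the a priori scalar curvature bound of Corollary \ref{c:scalarlb}. These yield quantitative estimates of the form $\brs{f} + \brs{\N f}^2 \leq C(1 + d_t(p,x)^2/\tau)$. Combined with the Gaussian decay of $u$ itself and the Gaussian decay of $h$ from its compactly supported initial data, this produces integrability of $h W^{H,f} u$ against $dV_{g_t}$.

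For item (2), I would apply Lemma \ref{l:conjugateheat}(1) with the forward heat solution $h$ paired against $v = W^{H,f} u$, after first truncating $v$ by a spatial cutoff sequence $\eta_R$ supported in $B_t(p, R)$ and passing $R \to \infty$ using the decay estimates from (1) to kill the error terms. This yields
\begin{equation*}
\frac{d}{dt} \int_M h\, W^{H,f} u \, dV_g = -\int_M h\, \square^*(W^{H,f} u) \, dV_g \geq -\int_M \tfrac{1}{6}\, h \brs{H}^2 u \, dV_g,
\end{equation*}
and integrating in $t$ from $t_1$ to $t_2$ gives the claim. For item (3), I would use the short-time asymptotic expansion $f = d_T(p, \cdot)^2 / (4\tau) + o(1)$ of the heat kernel as $t \to T^-$, together with matching bounds on $\N f$ and $\gD f$, so that $W^{H,f}$ approaches its Euclidean counterpart. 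The crucial observation is that the Euclidean Gaussian has $W \equiv 0$ identically, and the geometric corrections (including $\tau R$ and $\tau \brs{H}^2$) all vanish in the $\tau \to 0$ limit, yielding $\int h W^{H,f} u \, dV_g \to h(p, T) \cdot 0 = 0$.

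The main obstacle is establishing the heat-kernel Gaussian bounds and the short-time asymptotic expansion under generalized Ricci flow, which is a nontrivial adaptation of the corresponding Ricci-flow machinery from \cite{ChauPseudo, Zhang1}; the torsion contributions in $\square^*$ and in the associated Bochner-type estimates must be tracked carefully, but in each case they contribute only lower-order terms that are absorbed by the a priori scalar curvature lower bound. Once these heat kernel estimates are in hand, the integration by parts in (2) and the limit computation in (3) proceed essentially as in the classical setting.
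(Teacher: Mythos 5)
Your plan is essentially the paper's own route: the paper gives no argument for Proposition \ref{p:heatkernel} at all, deferring to \cite{ChauPseudo} Theorem 7.1 ``with straightforward modifications,'' and your sketch is exactly that adaptation. In particular you correctly identify the two points where the modification actually bites: the factor $u$ implicit in the integrands, and the pointwise identity $\square^*(W^{H,f}u) = -2\tau\brs{\Rc^{H,f}-\tfrac{1}{2\tau}g}^2 u + \tfrac16\brs{H}^2 u$ (the $\phi\equiv 0$, $\psi\equiv 0$ case of Proposition \ref{p:Ricciflowharnack}), which is precisely what produces the extra $\int\!\!\int \tfrac16 h\brs{H}^2 u$ term in item (2); your integration-by-parts and $t\to T^-$ asymptotics then follow \cite{ChauPseudo} as intended. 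One inaccuracy worth fixing: Corollary \ref{c:scalarlb} bounds $R^{H,\phi}$, not the reaction term $R-\tfrac14\brs{H}^2$ of $\square^*$ (the paper itself stresses that this term is \emph{not} controlled by the monotone quantity), so your appeal to it does not justify the Gaussian-type heat kernel estimates; what one actually uses is boundedness of $\Rm$ and $H$ on the compact time interval (compactness of $M$, or the bounded-geometry hypotheses of \cite{ChauPseudo}), under which the kernel estimates and the cutoff argument carry over verbatim.
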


\begin{cor} \label{c:GRFHarnack} Let $(M^n, g_t, H_t, \phi_t)$ denote a solution to generalized Ricci flow on a compact manifold, defined for $t \in [0,T]$.  Suppose $u = (4 \pi (T - t))^{-\frac{n}{2}} e^{-f}$ approaches $e^{\phi} \gd_{x_0}$ as $t \to T$ and solves $\square^*_{\phi} u = 0$.  Finally let $\psi$ denote the unique solution of the conjugate dilaton flow associated to $u$ with $\psi_T \equiv 0$.  Then for any $0 \leq t_0 \leq T$ one has
\begin{align*}
\left( W^{H,f+\phi} + \psi \right) u \leq 0.
\end{align*}
\begin{proof} Suppose there exists a point $(y_0,t_0)$ such that
\begin{align*}
V := \left( W^{H,f+\phi} + \psi \right) u (y_0,t_0) > 0.
\end{align*}
Choose a smooth nonnnegative function $h$ such that $h(y_0) = 1$ and $\sup_M h \leq 1$.  Solve the forward heat equation $\square h = 0$ on $[t_0,T]$ with initial data $h$ at time $t_0$.  Note that by the maximum principle $h$ remains nonnegative.  We then compute using Lemma \ref{l:conjugateheat} and Proposition \ref{p:Ricciflowharnack},
\begin{align*}
\frac{d}{dt} \int_M h V e^{-\phi} dV_g  =&\ - \int_M h \square^*_{\phi} V e^{-\phi} dV_g \geq 0.
\end{align*}
However we obtain a contradiction since by construction $\int_M h V e^{-\phi} dV_{g_{t_0}} > 0$ and by Proposition \ref{p:heatkernel} and the fact that $\psi_T \equiv 0$,
\begin{align*}
\lim_{t \to T} \int_M h V e^{-\phi} dV_g = 0.
\end{align*}
\end{proof}
\end{cor}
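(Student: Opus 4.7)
The plan is to argue by contradiction in the style of Perelman's original Harnack proof. Assume $V := (W^{H,f+\phi} + \psi) u$ is strictly positive at some $(y_0, t_0) \in M \times [0, T)$. I would then construct a forward-in-time integrated test quantity that is simultaneously nondecreasing in $t$ and forced to vanish in the limit $t \to T^-$, which contradicts the initial positivity.

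For the test function, I would choose a smooth nonnegative $h_{t_0}$ on $M$ with $h_{t_0}(y_0) = 1$ and $\sup h_{t_0} \leq 1$, then evolve it by the forward heat equation $\square h = 0$ on $[t_0, T]$; the maximum principle keeps $h \geq 0$. Monotonicity follows from the duality in Lemma \ref{l:conjugateheat}(2):
\[
\frac{d}{dt} \int_M h V e^{-\phi} dV_g = - \int_M h \left( \square^*_{\phi} V \right) e^{-\phi} dV_g \geq 0,
\]
where the sign comes from Proposition \ref{p:Ricciflowharnack}, which identifies $\square^*_{\phi} V$ with the manifestly nonpositive quantity $-2\tau |\Rc^{H,f+\phi} - \tfrac{1}{2\tau} g|^2 u$. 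Thus $\int_M h V e^{-\phi} dV_g$ is nondecreasing on $[t_0, T)$ and strictly positive at $t_0$.

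The decisive step, and the main technical obstacle, is verifying that $\int_M h V e^{-\phi} dV_g \to 0$ as $t \to T^-$. The key substitution is $v := u e^{-\phi}$: by Lemma \ref{l:conjugateheat}(5), $v$ solves the ordinary conjugate heat equation $\square^* v = 0$, and by the hypothesis on $u$ it concentrates to $\delta_{x_0}$ as $t \to T$. Rewriting the integrand as $h (W^{H,f+\phi} + \psi) v$ and noting that $v = (4 \pi \tau)^{-n/2} e^{-F}$ for $F := f + \phi$, Proposition \ref{p:heatkernel}(3) applies directly to give $\limsup_{t \to T^-} \int_M h W^{H,F} v \, dV_g \leq 0$. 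The remaining piece $\int_M h \psi v \, dV_g$ collapses to the pointwise value $h(x_0, T) \psi(x_0, T) = 0$ by concentration of $v$ and the terminal condition $\psi_T \equiv 0$. Adding these yields $\limsup_{t \to T^-} \int_M h V e^{-\phi} dV_g \leq 0$, the promised contradiction.

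The hardest piece is justifying this terminal limit rigorously with two coupled auxiliary functions $\phi$ and $\psi$ present. Proposition \ref{p:heatkernel} absorbs the core analytic difficulty for the weighted entropy term — handling the singular concentration of $v$ against the singular part of $W^{H,F}$ — while the conjugate dilaton flow was engineered precisely so that its additional source $-\tfrac{1}{6}|H|^2 u$ cancels the error term in Proposition \ref{p:heatkernel}(2), leaving only the boundary value $\psi_T \equiv 0$ at time $T$. This cancellation is the mechanism that promotes the ordinary Ricci-flow Harnack estimate to the generalized setting.
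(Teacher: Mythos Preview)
Your proposal is correct and follows essentially the same approach as the paper: contradiction via a forward heat solution $h$, monotonicity of $\int_M h V e^{-\phi} dV_g$ from Lemma~\ref{l:conjugateheat}(2) and Proposition~\ref{p:Ricciflowharnack}, and vanishing of the terminal limit from Proposition~\ref{p:heatkernel} together with $\psi_T \equiv 0$. Your substitution $v = u e^{-\phi}$ (via Lemma~\ref{l:conjugateheat}(5)) and splitting into the $W^{H,F}$ and $\psi$ pieces is precisely the way to unpack the paper's terse citation of Proposition~\ref{p:heatkernel} at the end, and your remark about the $\tfrac{1}{6}|H|^2$ cancellation with Proposition~\ref{p:heatkernel}(2) is accurate interpretive commentary rather than a different argument.
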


\section{Weighted Nash entropy and pseudolocality estimates}

In this section we give an extension of the Nash entropy convexity for Ricci flow to the setting of weighted scalar curvature.  Then we derive a pseudolocality principle for Ricci flow in terms of weighted scalar curvature generalizing Perelman's result \cite{Perelman1}.  The key new technical point is a weighted isoperimetric inequality and its relationship to a weighted log-Sobolev inequality.

\subsection{Weighted Nash and Perelman Entropies}

\begin{defn} Given a smooth Riemannian manifold $(M, g)$, $f, \phi \in C^{\infty}(M)$ and $\tau > 0$, let $d \nu = (4 \pi \tau)^{-\frac{n}{2}} e^{-f} e^{-\phi} dV_g$, and define the \emph{weighted Nash entropy} by
\begin{gather} \label{f:nash}
\NN (g, \phi, f, \tau) := \int_M  f  d \nu - \frac{n}{2}.
\end{gather}
Furthermore define the \emph{weighted Perelman entropy} by
\begin{gather} \label{f:Perelmanent}
\WW (g, \phi, f, \tau) := \int_M \left[ \tau R^{f + \phi} + f - n \right] d \nu.
\end{gather}
\end{defn}

\begin{prop} \label{p:weightednash} Let $(M, g_t, \phi_t)$ denote a solution to Ricci flow defined on $(-T, 0]$, and suppose $u_t$ is a solution of the conjugate heat equation $\square^*_{\phi} u = 0$, and define $f$ via $u = (4 \pi \tau)^{-\frac{n}{2}} e^{-f}$, where $\tau = -t$, and suppose the measure $d \nu$ has unit mass.  Then
\begin{align*}
\frac{d}{d\tau} \left( \tau \NN \right) =&\ \WW, \qquad \frac{d}{d\tau} \WW = -2 \tau \int_M \brs{\Rc^{f + \phi} - \frac{1}{2 \tau} g}^2 d \nu.
\end{align*}
\begin{proof} Since $\square^*_{\phi} u = 0$ an elementary computation shows that we have that
\begin{align*}
\frac{d}{d \tau} d\nu =&\ \left( u^{-1} \gD u - 2 u^{-1} \IP{\N \phi, \N u} - \gD \phi + \brs{\N \phi}^2 \right) d \nu.
\end{align*}
Furthermore, using the evolution equation (\ref{f:shrinkerfev}) satisfied by f and integration by parts we compute 
\begin{align*}
\frac{d}{d\tau} \int_M f d \nu =&\ \int_M \left( \gD f - \brs{\N f}^2 - 2 \IP{\N \phi, \N f} + R^{\phi} - \frac{n}{2 \tau} \right) d \nu\\
&\ + \int_M f \left( u^{-1} \gD u - 2 u^{-1} \IP{\N \phi, \N u} - \gD \phi + \brs{\N \phi}^2 \right) d \nu\\
=&\ \int_M \left( R^{f + \phi} - \frac{n}{2 \tau} \right) d \nu.
\end{align*}
Thus, using that $\int_M d \nu \equiv 1$, we obtain
\begin{align*}
\frac{d}{d \tau} \left( \tau \NN \right) =&\ \int_M \left[ \tau R^{f + \phi} + f - n\right] d \nu = \WW,
\end{align*}
as claimed.  Furthermore we note that we can express
\begin{align*}
\WW = \int_M \left[ \WW^{f + \phi} - \phi \right] d \nu.
\end{align*}
Then it follows by Lemma \ref{l:conjugateheat} and Proposition \ref{p:Ricciflowharnack} with $\psi \equiv 0$ that
\begin{align*}
\frac{d}{d\tau} \WW =&\ \frac{d}{d\tau} \int_M \left[  \left( \WW^{f + \phi} - \phi \right) u \right] e^{-\phi} dV_g = \int_M \left[ \square^*_{\phi} (\WW^{f + \phi} u) + u \square \phi - \phi \square_{\phi}^* u \right] e^{-\phi} dV_g\\
=&\ - 2 \tau \int_M \brs{\Rc^{f + \phi} - \frac{1}{2 \tau} g}^2 d \nu,
\end{align*}
as claimed.
\end{proof}
\end{prop}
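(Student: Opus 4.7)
The plan is to derive both identities by combining the pointwise Harnack identity from Proposition \ref{p:Ricciflowharnack} (applied with $H \equiv 0$ and $\psi \equiv 0$) with the integration-by-parts rules of Lemma \ref{l:conjugateheat}, using the fact that $d\nu = u\, e^{-\phi} dV_g$ is a probability measure that is preserved under the flow (Lemma \ref{l:conjugateheat}(4)). The key simplification in the Ricci flow case is that the dilaton flow reduces to $\square \phi = 0$, which will cause cross terms involving $\phi$ to cancel.

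For the first identity, I would begin by extracting the evolution equation for $f$ from $\square^*_\phi u = 0$. Unpacking
$$\square^*_\phi \bigl((4\pi\tau)^{-n/2} e^{-f}\bigr) = 0$$
and dividing by $u$ yields (after converting $\dt = -\dtau$)
$$\dtau f = \gD f - |\N f|^2 - 2\IP{\N \phi, \N f} + R^\phi - \tfrac{n}{2\tau}.$$
Combining this with the evolution of the measure $d\nu$ (which follows from the Ricci flow equation, $\square \phi = 0$, and the formula above) and integrating by parts against $d\nu$, the derivative $\frac{d}{d\tau} \int f\, d\nu$ collapses, modulo total divergences with respect to $d\nu$, to $\int (R^{f+\phi} - \frac{n}{2\tau})\, d\nu$. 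Since $\int d\nu = 1$, this yields
$$\frac{d}{d\tau}(\tau \NN) = \NN + \tau \frac{d}{d\tau}\NN = \int f\, d\nu - \tfrac{n}{2} + \tau \int R^{f+\phi}\, d\nu - \tfrac{n}{2} = \WW.$$

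For the second identity, the key rewriting is $\WW = \int (W^{f+\phi} - \phi)\, d\nu$, which expresses the weighted Perelman entropy in terms of the standard entropy density $W^{f+\phi}$ of Section \ref{s:harnack}, shifted by $-\phi$. I would then apply Lemma \ref{l:conjugateheat}(2) to $\int (W^{f+\phi} - \phi)\, u \, e^{-\phi} dV_g$ and separate out the $-\phi u$ contribution using the product rule for $\square^*_\phi$, obtaining
$$\frac{d}{d\tau} \WW = \int \bigl[\square^*_\phi(W^{f+\phi} u) - \phi\, \square^*_\phi u + u\, \square \phi \bigr] e^{-\phi} dV_g.$$
The middle term vanishes because $u$ solves the weighted conjugate heat equation, and the last term vanishes because $\square \phi = 0$ in the Ricci flow case. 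The remaining term is handled directly by Proposition \ref{p:Ricciflowharnack} (applied with $\psi \equiv 0$), which delivers $\square^*_\phi(W^{f+\phi} u) = -2\tau|\Rc^{f+\phi} - \frac{1}{2\tau} g|^2 u$, giving the second identity.

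The main obstacle is the bookkeeping around the weight shift between $f+\phi$ and $f$ in the definitions of $\NN$ and $\WW$, together with the need to verify that the boundary-divergence terms generated by integration by parts in the first identity really do disappear upon integration against $d\nu$. Once one observes that $d\nu$ is preserved and that $\square \phi = 0$ in the Ricci flow setting, the ancillary $\phi$-terms cancel and the computation reduces cleanly to the Harnack-type identity already established in Proposition \ref{p:Ricciflowharnack}.
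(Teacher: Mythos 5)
Your proposal is correct and follows essentially the same route as the paper's proof: extracting the evolution of $f$ from $\square^*_{\phi} u = 0$, computing the evolution of $d\nu$ and integrating by parts to get $\frac{d}{d\tau}\int f\, d\nu = \int (R^{f+\phi} - \tfrac{n}{2\tau})\, d\nu$, and then differentiating $\WW = \int (W^{f+\phi} - \phi)\, d\nu$ via Lemma \ref{l:conjugateheat} together with Proposition \ref{p:Ricciflowharnack} with $\psi \equiv 0$, using $\square \phi = 0$ and $\square^*_{\phi} u = 0$ to kill the auxiliary terms. No substantive differences or gaps.
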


\subsection{Weighted log-Sobolev and isoperimetric inequalities}

\begin{defn} \label{d:wLS} Given a Riemannian manifold $(M^n, g)$ and $\phi \in C^{\infty}(M)$, we say that this data satisfies a \emph{$\phi$-weighted Sobolev inequality} if there exists $\gL = \gL(g, \phi) > -\infty$ such that for all $u = (2 \pi)^{-\frac{n}{2}} e^{-f}$ such that $\int_M u e^{-\phi} dV_g = 1$ one has
\begin{align} \label{f:WLS}
\int_M \left( \tfrac{1}{2} \brs{\N f}^2 + f - n \right) u e^{-\phi} dV_g \geq \gL.
\end{align}
\end{defn}

\begin{rmk} Given $\psi \in W^{1,2}(M)$, define $f$ by
\begin{align*}
\psi^2 = (2 \pi)^{-\frac{n}{2}} e^{-f} \int_M \psi^2 e^{-\phi} dV_g = u \int_M \psi^2 e^{-\phi} dV_g.
\end{align*}
It then follows that (\ref{f:WLS}) is equivalent to
\begin{align*}
\int_M & \left( 2 \brs{\N \psi}^2 - \psi^2 \log \psi^2 \right) e^{-\phi} dV_g + \log \left( \int_M \psi^2 e^{-\phi} dV_g \right) \int_M \psi^2 e^{-\phi} dV_g\\
\geq&\ \left( \frac{n}{2} \log (2 \pi) + n + \gL \right) \int_M \psi^2 e^{-\phi} dV_g.
\end{align*}
\end{rmk}

\begin{defn} \label{d:wIso} Fix $(M, g)$ a Riemannian manifold and $\phi$ a smooth function.  We say that $(M, g)$ satisfies a \emph{$\phi$-weighted isoperimetric inequality} with constant $I_n^{\phi}$ if for all compact domains $\Omega \subset M$ with $C^1$ boundary one has
\begin{align*}
\left [ \Area_{e^{-\phi} dA} (\del \Omega) \right]^n \geq I_n^{\phi} \left[ \Vol_{e^{-\phi} dV_g} (\Omega) \right]^{n-1}.
\end{align*}
\end{defn}

\begin{rmk} \label{r:weightediso} The $\phi$-weighted isoperimetric inequality is scale-invariant for the metric $g$, whereas for a constant $\gl$ an elementary argument shows that $I_n^{\phi + \gl} = e^{-\gl} I_n^{\phi}$.
\end{rmk}

\begin{thm} \label{t:isotosob} Suppose $(M^n, g)$ is a Riemannian manifold and fix $\phi$ a smooth function.  Suppose $\bar{B}(x_0, R)$ is compact, and that $(B(x_0, R), g)$ satisfies a $\phi$-weighted isoperimetric inequality with constant $I_n^{\phi}$.  Then for any $C^1$ function $\psi$ compactly supported in $B(x_0, R)$ one has
\begin{align*}
\int_M & \left( 2 \brs{\N \psi}^2 - \psi^2 \log \psi^2 \right) e^{-\phi} dV_g + \log \left( \int_M \psi^2 e^{-\phi} dV_g \right) \int_M \psi^2 e^{-\phi} dV_g\\
\geq&\ \left( \frac{n}{2} \log (2 \pi) + n + \log \left(\frac{I_n^{\phi}}{c_n} \right) \right) \int_M \psi^2 e^{-\phi} dV_g,
\end{align*}
where $c_n$ denotes the Euclidean isoperimetric constant.  In particular, $(B(x_0, R), g)$ satisfies a $\phi$-weighted Sobolev inequality with constant
\begin{align} \label{f:Sobolevconst}
\gL = \log \left( \frac{I_n^{\phi}}{c_n} \right).
\end{align}
\begin{proof} For simplicity of notation we can just set $M = B(x_0, R)$.  Also we perform a useful rescaling.  If we set $\til{g} = \left( \frac{c_n}{I_n^{\phi}} \right)^{\frac{2}{n}} g$ then the required inequality is equivalent to
\begin{gather} \label{f:sobiso10}
\begin{split}
\int_M & \left( 2 \left( \frac{c_n}{I_n^{\phi}} \right)^{\frac{2}{n}} \brs{\N \psi}^2 - \psi^2 \log \psi^2 \right) e^{-\phi} dV_{\til{g}} + \log \left( \int_M \psi^2 e^{-\phi} dV_{\til{g}} \right) \int_M \psi^2 e^{-\phi} dV_{\til{g}}\\
\geq&\ \left( \frac{n}{2} \log (2 \pi) + n \right) \int_M \psi^2 e^{-\phi} dV_{\til{g}}.
\end{split}
\end{gather}
We will show (\ref{f:sobiso10}), dropping the tildes from the notation for simplicity.  By an approximation argument it suffices to consider the case $\psi \geq 0$.  For $s > 0$ let $M_s = \{x \in M\ |\ \psi \geq s \}$.  Let $\gG_s = \del M_s$, and let
\begin{align*}
F(s) := \Vol_{e^{-\phi} dV_g}(M_s).
\end{align*}
Choose $r_0 < \infty$ such that
\begin{align*}
\Vol_{e^{-\phi} dV_g} ( \{ \psi > 0 \} ) = \gw_n r_0^n = \Vol_{dV_{\mathbb R^n}} (B_{r_0}).
\end{align*}
Choose the unique rotationally invariant function $h : \mathbb R^n \to \mathbb R$ such that
\begin{align*}
\Vol_{dV_{\mathbb R^n}} ( \{ h \geq s \}) = F(s),
\end{align*}
and such that $h(x) = 0$ for $\brs{x} \geq r_0$.  Furthermore set $M'_s = \{ h \geq s \}$, and $\gG'_s = \del M'_s$.  
We recall the statement of the co-area formula,
\begin{align*}
\int_M H \brs{\N f} dV_g = \int_{-\infty}^{\infty} \int_{\{f = s\}} H dA ds.
\end{align*}
Applying this with $H = e^{-\phi} \brs{\N \psi}^{-1}$ and $f = \psi$ on $M_t$ yields
\begin{align*}
F(t) = \Vol_{e^{-\phi} dV_g} (M_t) = \int_t^{\infty} \int_{\{\psi = s\}} \brs{\N \psi}^{-1} e^{-\phi} dA ds.
\end{align*}
This implies that for almost all $s$ one has
\begin{align*}
- \frac{dF}{ds}(s) = \int_{\{\psi = s\}} \brs{\N \psi}^{-1} e^{-\phi} dA.
\end{align*}
Thus for an arbitrary Lipschitz function $\gl$ we have
\begin{align*}
- \int_0^{\infty} \gl \frac{d F}{ds} ds = \int_0^{\infty} \gl \int_{ \{\psi = s\}} \brs{ \N \psi}^{-1} e^{-\phi} dA ds.
\end{align*}
On the other hand, applying the co-area formula with $H = \gl(\psi) \brs{\N \psi}^{-1} e^{-\phi}$ yields
\begin{align*}
\int_{ \{\psi > 0\}} \gl(\psi) e^{-\phi} dV_g = \int_0^{\infty} \gl(s) \int_{\{\psi = s\}} \brs{\N \psi}^{-1} e^{-\phi} dA ds.
\end{align*}
Integrating by parts we have
\begin{align*}
\int_0^{\infty} \frac{d \gl}{ds} F ds = \int_{\{\psi > 0\}} \gl(\psi) e^{-\phi} dV_g.
\end{align*}
By construction, we may argue similary with the function $\gl(h)$ on $\mathbb R^n$ to obtain
\begin{align*}
\int_{\{h > 0\}} \gl(h) dV_{\mathbb R^n} = \int_0^{\infty} \frac{d \gl}{ds} F(s) = \int_{\{\psi > 0\}} \gl(\psi) e^{-\phi} dV_g.
\end{align*}
Choosing $\gl(s) = (\log s^2) s^2$ and $\gl(s) = s^2$ thus implies
\begin{gather} \label{f:sobiso20}
\begin{split}
\int_M ( \log \psi^2) \psi^2 ) dV_g =&\ \int_{\mathbb R^n} ( \log h^2) h^2 dV_{\mathbb R^n}, \qquad \int_M \psi^2 dV_g = \int_{\mathbb R^n} h^2 dV_{\mathbb R^n}.
\end{split}
\end{gather}
This reduces the result to comparing to comparing the gradient terms.  By the co-area formula we have
\begin{align*}
\int_t^{\infty} \int_{\gG_s} \brs{\N \psi}^{-1} e^{-\phi} dA_g ds = F(t) = \int_{t}^{\infty} \int_{\gG'_s} \brs{\N h}^{-1} dA_{\mathbb R^n} ds.
\end{align*}
Differentiating this yields
\begin{align*}
\int_{\gG_s} \brs{\N \psi}^{-1} e^{-\phi} dA = \int_{\gG'_s} \brs{\N h}^{-1} dA_{\mathbb R^n}.
\end{align*}
Furthermore note that by construction one has
\begin{align*}
(\Area( \gG'_s))^n =&\ c_n (\Vol (M'_s))^{n-1} = c_n(\Vol_{e^{-\phi} dV_g} (M_s))^{n-1}\\
\leq&\ \frac{c_n}{I_n^{\phi}} (\Area_{e^{-\phi} dA}(\gG_s))^n.
\end{align*}
Furthermore, using that $h$ is rotationally symmetric, and applying H\"older's inequality we get
\begin{align*}
\int_{\gG'_s} \brs{\N h} dA_{\mathbb R^n} \cdot \int_{\gG'_s} \brs{\N h}^{-1} dA_{\mathbb R^n} =&\ (\Area(\gG'_s))^2\\
\leq&\ \left( \frac{c_n}{I_n^{\phi}} \right)^{\frac{2}{n}} (\Area_{e^{-\phi} dA} (\gG_s))^2\\
\leq&\ \left( \frac{c_n}{I_n^{\phi}} \right)^{\frac{2}{n}} \int_{\gG_s} \brs{\N \psi} e^{-\phi} dA_g \cdot \int_{\gG_s} \brs{\N \psi}^{-1} e^{-\phi} dA_g.
\end{align*}
This then implies
\begin{align*}
\int_{\gG'_s} \brs{\N h} dA_{\mathbb R^n} \leq \left( \frac{c_n}{I_n^{\phi}} \right)^{\frac{2}{n}} \int_{\gG_s} \brs{\N \psi} e^{-\phi} dA_g.
\end{align*}
The co-area formula then implies
\begin{gather} \label{f:sobiso30}
\begin{split}
\left( \frac{c_n}{I_n^{\phi}} \right)^{\frac{2}{n}} \int_M \brs{\N \psi}^2 e^{-\phi} dV_g =&\ \left( \frac{c_n}{I_n^{\phi}} \right)^{\frac{2}{n}} \int_0^{\infty} \int_{\gG_s} \brs{\N \psi} e^{-\phi} dA_{g} ds\\
\geq&\  \int_0^{\infty} \int_{\gG'_s} \brs{\N h} dA_{\mathbb R^n}\\
=&\ \int_{\mathbb R^n} \brs{\N h}^2 dV_{\mathbb R^n}.
\end{split}
\end{gather}
Combining (\ref{f:sobiso20}), (\ref{f:sobiso30}) and applying the Euclidean logarithmic Sobolev inequality \cite{Gross} yields
\begin{align*}
\int_M & \left( 2 \left( \frac{c_n}{I_n^{\phi}} \right)^{\frac{2}{n}} \brs{\N \psi}^2 - \psi^2 \log \psi^2 \right) e^{-\phi} dV_{{g}} + \log \left( \int_M \psi^2 e^{-\phi} dV_{g} \right) \int_M \psi^2 e^{-\phi} dV_{{g}}\\
&\ - \left( \frac{n}{2} \log (2 \pi) + n \right) \int_M \psi^2 e^{-\phi} dV_{{g}}\\
\geq&\ \int_{\mathbb R^n} \left( 2 \brs{\N h}^2 - h^2 \log h^2 \right) dV_{\mathbb R^n} + \log \left( \int_{\mathbb R^n} h^2 dV_{\mathbb R^n} \right) \int_{\mathbb R^n} h^2 dV_{\mathbb R^n}\\
&\ - \left( \frac{n}{2} \log (2 \pi) + n \right) \int_{\mathbb R^n} h^2 dV_{\mathbb R^n}\\
\geq&\ 0,
\end{align*}
as required.
\end{proof}
\end{thm}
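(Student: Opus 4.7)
The plan is to use Schwarz-type symmetrization with respect to the weighted measure, reducing the problem to the classical Euclidean logarithmic Sobolev inequality of Gross. The motivation is that while we have no direct control over the geometry of $M$, the weighted isoperimetric inequality is exactly the tool needed to compare weighted surface areas on $M$ to Euclidean surface areas of Schwarz-symmetrized level sets in $\mathbb{R}^n$.

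First I would rescale, replacing $g$ by $\tilde{g} = (c_n/I_n^{\phi})^{2/n} g$, so that the target inequality becomes the statement that the constant in front of $|\N \psi|^2$ picks up the factor $(c_n/I_n^{\phi})^{2/n}$ while the rest of the inequality matches the standard Euclidean logarithmic Sobolev form. Then by a standard density argument I reduce to nonnegative Lipschitz $\psi$ compactly supported in $B(x_0, R)$.

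Next I would introduce the weighted distribution function $F(s) := \Vol_{e^{-\phi} dV_g}(\{\psi \geq s\})$, and define $h : \mathbb{R}^n \to \mathbb{R}$ to be the unique nonnegative radially-decreasing function vanishing outside a Euclidean ball of volume $\Vol_{e^{-\phi} dV_g}(\{\psi > 0\})$ and satisfying $\Vol_{dV_{\mathbb{R}^n}}(\{h \geq s\}) = F(s)$ for all $s$. The crucial identity, obtained by applying the co-area formula on both $M$ (with weight $e^{-\phi} |\N\psi|^{-1}$) and on $\mathbb{R}^n$ (with weight $|\N h|^{-1}$) and comparing against an arbitrary Lipschitz test $\gl$, is $\int \gl(\psi) e^{-\phi} dV_g = \int \gl(h) dV_{\mathbb{R}^n}$. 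Applied to $\gl(s) = s^2$ and $\gl(s) = s^2 \log s^2$, this takes care of every term in the inequality except the gradient term.

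The main obstacle, as always in symmetrization arguments, is the gradient comparison. Here the pointwise identity $\int_{\gG_s} |\N\psi|^{-1} e^{-\phi} dA = \int_{\gG'_s} |\N h|^{-1} dA_{\mathbb{R}^n}$ (where $\gG_s = \del\{\psi \geq s\}$ and $\gG'_s = \del\{h \geq s\}$) follows from differentiating the distribution identity. I would then chain three inequalities on each level set: the weighted isoperimetric inequality to control $\Area(\gG'_s)^n$ by $(c_n/I_n^{\phi}) \cdot \Area_{e^{-\phi}dA}(\gG_s)^n$, Cauchy–Schwarz/Hölder to write $\Area(\gG'_s)^2 = \int_{\gG'_s}|\N h| \cdot \int_{\gG'_s}|\N h|^{-1}$ and analogously on $\gG_s$, and finally the pointwise identity above to cancel the reciprocal-gradient integrals. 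The outcome is the bound $\int_{\gG'_s}|\N h|\,dA_{\mathbb{R}^n} \leq (c_n/I_n^{\phi})^{2/n} \int_{\gG_s}|\N\psi| e^{-\phi} dA$, which on integrating in $s$ (co-area once more) yields $(c_n/I_n^{\phi})^{2/n}\int_M |\N\psi|^2 e^{-\phi}\,dV_g \geq \int_{\mathbb{R}^n}|\N h|^2\,dV_{\mathbb{R}^n}$. Substituting all three identities into the target inequality reduces it to Gross's Euclidean log-Sobolev inequality applied to $h$, which closes the argument. The claimed Sobolev constant $\gL = \log(I_n^{\phi}/c_n)$ is then read off from the definition of a $\phi$-weighted Sobolev inequality.
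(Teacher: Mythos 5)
Your proposal is correct and follows essentially the same route as the paper: rescaling by $(c_n/I_n^{\phi})^{2/n}$, Schwarz symmetrization via the weighted distribution function, the co-area identity with test functions $\gl(s)=s^2$ and $s^2\log s^2$, the level-set gradient comparison chaining the weighted isoperimetric inequality with H\"older, and finally Gross's Euclidean log-Sobolev inequality. No substantive differences to note.
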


\subsection{Weighted pseudolocality estimate}

\begin{thm} \label{t:RFgenpseudo2} (cf. Theorem \ref{t:RFgenpseudo}) For every $\ga > 0$ there exist $\gd, \ge > 0$ satisfying the following: Suppose we have a smooth pointed Ricci flow solution $(M, (p_0, 0), g_t)$ defined for $t \in [0, (\ge r_0)^2]$, such that each time slice is complete.  Suppose that there exists $\phi_0 \in C^{\infty}_0(M)$ such that
\begin{enumerate}
\item $R^{\phi_0}(p, 0) \geq - r_0^{-2}$ for any $p \in B_0(p_0, r_0)$,
\item $\phi_0(p, 0) \geq - \gd$ for any $p \in B_0(p_0, r_0)$,
\item The $\phi_0$-weighted isoperimetric constant of $B_0(p_0, r_0)$ satisfies $I_n^{\phi_0} \geq (1 - \gd ) c_n$, where $c_n$ denotes the Euclidean isoperimetric constant.
\end{enumerate}
Then $\brs{\Rm}(p,t) < \ga t^{-1} + (\ge r_0)^{-2}$ whenever $0 < t \leq (\ge r_0)^2$ and $d_t(p,p_0) \leq \ge r_0$.
\end{thm}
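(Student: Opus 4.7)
The plan is to follow Perelman's original proof of pseudolocality \cite{Perelman1}, replacing the classical entropy monotonicity with the weighted version in Proposition \ref{p:weightednash} and the classical log-Sobolev inequality with the weighted version in Theorem \ref{t:isotosob}. Since we are in the Ricci flow setting ($H \equiv 0$), the dilaton flow reduces to the forward heat equation $\square \phi = 0$, so we evolve $\phi_0$ forward to produce $\phi_t$ along the flow; by Corollary \ref{c:scalarlb} (applied locally via a cutoff when $M$ is noncompact), the lower bound $R^{\phi_t} \geq -r_0^{-2}$ persists inside the parabolic region we work in.

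I would argue by contradiction: fix $\ga > 0$ and suppose the conclusion fails along sequences $\gd_k, \ge_k \to 0$ with Ricci flows $(M_k, g^k_t)$, basepoints $(p_{0,k}, 0)$, and initial dilatons $\phi_{0,k}$ satisfying hypotheses (1)--(3), but admitting bad points $(\bar p_k, \bar t_k)$. Applying Perelman's point-picking lemma, promote these to ``good bad points'' $(q_k, t_k)$ with curvature $Q_k := |\Rm|(q_k, t_k)$ dominating a parabolic neighborhood of controlled size. Next, construct the $\phi$-weighted conjugate heat kernel $u_k$ solving $\square^*_{\phi} u_k = 0$ on $[0, t_k]$ with terminal data approximating $e^{\phi_{t_k}(q_k)} \gd_{q_k}$, write $u_k = (4 \pi \tau)^{-n/2} e^{-f_k}$ with $\tau = t_k - t$, and consider the weighted entropy $\WW = \WW(g_t, \phi_t, f_k, \tau)$ of (\ref{f:Perelmanent}). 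After parabolic rescaling by $Q_k$ near $(q_k, t_k)$ the geometry is almost Euclidean at the scale $Q_k^{-1/2}$; combining this with Perelman's local entropy computation and the fact that $\phi_t(q_k)$ remains of order $O(\gd_k)$ under the forward heat flow (hence gives only lower-order contributions), one obtains a universal upper bound $\WW(g_{t_k-\tau_{0,k}}, \phi, f_k, \tau_{0,k}) \leq -\gb(\ga) < 0$ at an appropriate rescaled time $\tau_{0,k}$. Monotonicity of $\WW$ in $\tau$ from Proposition \ref{p:weightednash} then propagates this upper bound to $t=0$, yielding $\WW(g_0, \phi_0, f_k|_{t=0}, t_k) \leq -\gb(\ga)$.

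The last step contradicts this by producing a lower bound on $\WW$ at $t=0$ using Theorem \ref{t:isotosob}: hypothesis (3) gives a $\phi_0$-weighted log-Sobolev inequality on $B_0(p_{0,k}, r_0)$ with constant $\gL_k \geq \log(1-\gd_k) \to 0$, and combined with $\phi_0 \geq -\gd_k$ this forces $\WW(t=0) \geq -C(n)\gd_k$, the desired contradiction once $\gd_k$ is small enough. The hard part will be the localization. The weighted log-Sobolev inequality from Theorem \ref{t:isotosob} applies only to test functions supported in $B_0(p_{0,k}, r_0)$, whereas the weighted conjugate heat kernel $u_k$ is typically supported on all of $M$, so one must show that $u_k$ concentrates inside $B_0(p_{0,k}, r_0)$ at $t=0$ up to exponentially small error. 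Following Perelman, this is handled by Gaussian tail estimates for the conjugate heat kernel together with a cutoff argument; in the weighted version one must further verify that the extra factor $e^{-\phi_t}$ in the measure does not spoil these Gaussian bounds, which follows from the propagation of $\phi_0 \geq -\gd_k$ by the maximum principle for $\square \phi = 0$ and a Hein--Naber-type localization on the weighted conjugate heat kernel. Confirming that this truncation is compatible with the rescaled local entropy estimate at $(q_k, t_k)$ is the most technically delicate point of the argument.
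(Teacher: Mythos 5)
There is a genuine gap, and it comes from the structural choice at the heart of your plan: you evolve $\phi_0$ forward by $\square \phi = 0$ and run the \emph{weighted} machinery along the flow (weighted conjugate heat kernel $\square^*_\phi u = 0$, weighted entropy $\WW$ and Proposition \ref{p:weightednash}). The hypotheses do not give you the control on $\phi_t$ that this requires. Hypothesis (2) is a one-sided bound $\phi_0 \geq -\gd$, valid only on $B_0(p_0,r_0)$; outside that ball $\phi_0$ is completely uncontrolled (compact support gives boundedness but no quantitative bound), and hypothesis (3) constrains $\phi_0$ from above only in an averaged, isoperimetric sense. Consequently your key claim that ``$\phi_t(q_k)$ remains of order $O(\gd_k)$ under the forward heat flow'' is unjustified: the heat flow at the bad point, which sits at positive time and possibly near the edge of the controlled region, averages in values of $\phi_0$ from regions where no bound holds. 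Without pointwise (indeed $C^1$-level, after parabolic rescaling) control of $\phi_t$ near $(q_k,t_k)$, the adaptation of Perelman's local entropy estimate (the step producing $\WW \leq -\gb(\ga)$ at the rescaled time) does not go through for the weighted kernel and weighted entropy. Similarly, your opening assertion that $R^{\phi_t} \geq -r_0^{-2}$ ``persists inside the parabolic region'' by a localized Corollary \ref{c:scalarlb} is unfounded: $R^{\phi}$ is a supersolution of a heat equation, so a local minimum principle needs control on the lateral boundary of the parabolic cylinder, which the hypotheses do not provide. (It is also unnecessary: the scalar curvature bound is only ever needed at $t=0$.)

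The paper's proof avoids all of these issues by not coupling $\phi$ into the parabolic argument at all. It runs Perelman's original, unweighted argument verbatim (standard conjugate heat kernel $\square^* u = 0$ with $u = (4\pi\tau)^{-n/2}e^{-F}$, standard entropy density $W^{F}$, the point-picking and local entropy lemmas of Kleiner--Lott, the cutoff function $h$ and the unweighted Harnack estimate) all the way back to $t=0$. Only at the final stage, in the time-zero entropy integral, does it substitute $F = f + \phi_0$, regroup the integrand so that the hypotheses enter as $-R^{\phi_0} \bar t \leq \bar t r_0^{-2}$, $-\int \phi_0 h u \leq \gd$, and then contradict the $\phi_0$-weighted log-Sobolev inequality supplied by Theorem \ref{t:isotosob} and hypothesis (3). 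So the weight is a purely time-zero device, and no evolution, positivity-at-later-times, or Gaussian concentration statement for a weighted kernel is needed; your localization worries (the ``most technically delicate point'' you flag) are handled exactly as in Kleiner--Lott with the unweighted kernel and cutoff. If you want to salvage your route, you would have to either extract pointwise two-sided bounds on $\phi_t$ near the bad point from (2)--(3) (which does not appear possible) or add hypotheses; the intended proof shows this is not required.
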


\begin{rmk} In the hypotheses above there is both a lower bound for $\phi_0$ in item (2) and an implicit upper bound for $\phi_0$ in item (3) (cf. Remark \ref{r:weightediso}).
\end{rmk}

\begin{proof} We follow the proof in \cite{KL} and briefly indicate the initial phases.  By scale invariance of the hypotheses and conclusion it suffices to consider the case $r_0 = 1$ and also $\ga < \tfrac{1}{100n}$.  If the theorem is false, there exist sequences $\ge_k \to 0, \gd_k \to 0$ pointed Ricci flow solutions $(M_k, (p_{0,k},0), g^k)$ and functions $\phi_0^k$ which satisfy the hypotheses of the theorem but for which there exists $(p_k, t_k)$ with $0 < t_k \leq \ge_k$, $d(p_k, t_k) \leq \ge_k$, but $\brs{\Rm}(p_k, t_k) \geq \ga t_k^{-1} + \ge_k^{-2}$.  Choose $A_k = \tfrac{1}{100 n \ge_k}$, and employ (\cite{KL} Lemma 31.1) to obtain a new sequence $(\bar{p}_k, \bar{t}_k)$ which lie at the center of parabolic balls of a controlled size.  We let $u_k = (4 \pi (\bar{t}_k - t))^{-\frac{n}{2}} e^{-F_k}$ satisfy $\square^* u_k = 0$ and $\lim_{t \to \bar{t}_k^-} u(p, t) = \gd_{\bar{p}_k}(p)$.  Furthermore let $W^k = W^{F_k}$.  It follows from (\cite{KL} Lemma 33.4) that there exists $\gb > 0$ so that for all sufficiently large $k$, there exists $\til{t}_k \in [\bar{t}_k - \tfrac{1}{2} \ga Q_k^{-1}, \bar{t}_k]$ with $\int_{B_k} W^k dV_k \leq - \gb$, where $Q_k = \brs{\Rm}(\bar{p}_k, \bar{t}_k)$, and $B_k = B_{\til{t}_k}(\bar{p}_k, \sqrt{\bar{t}_k - \til{t}_k})$.

The aim is to use this to contradict the weighted log-Sobolev inequality at the initial time.  We drop the subscripts and work with a particular large $k$.  One constructs a function $h$ on spacetime from a modified distance function satisfying certain estimates we will use below (cf. \cite{KL} \S 34).  Using Perelman's Harnack estimate and estimates for $h$ we produce a function $u = (4 \pi \bar{t})^{-\frac{n}{2}} e^{-F}$ at time zero with mass arbitrarily close to $1$ which satisfies (cf. \cite{KL} (34.6))
\begin{align*}
\gb(1 - A^{-2}) \leq - \int W^F  h u dV_g.
\end{align*}
We drop the subscript on $\phi_0$ and define $f$ via $F = f + \phi$.  We thus express
\begin{gather} \label{f:pseudo10}
\begin{split}
\gb (1 - A^{-2}) \leq&\ \int_M \left[ \left(-2 \gD \phi - 2 \gD f + \brs{\N (f + \phi)}^2 - R \right) \bar{t} - (f + \phi) + n \right] h u dV_g\\
=&\ \int_M \left[ \left(- R^{\phi} - 2 \gD f + \brs{\N f}^2 + 2 \IP{\N f, \N \phi}  \right) \bar{t} - (f + \phi) + n \right] h u dV_g.
\end{split}
\end{gather}
We set $\til{f} = f - \log h$ and integrate by parts to obtain
\begin{align*}
\int_M & \left( -2 \gD f + \brs{\N f}^2 + 2 \IP{\N f, \N \phi} \right) h e^{-f - \phi} dV_g\\
&\ \quad = \int_M \left( 2 \IP{\N f, \N (h e^{-f - \phi})} + (\brs{\N f}^2 + 2 \IP{\N f, \N \phi}) h e^{-f-\phi} \right) dV_g\\
&\ \quad = \int_M  \IP{\N f, 2 h^{-1} \N h - \N f} h e^{-f - \phi} dV_g\\
&\ \quad = \int_M \left( - \brs{\N \til{f}}^2 + h^{-2} \brs{\N h}^2 \right) h e^{-f - \phi} dV_g.
\end{align*}
Furthermore, by hypothesis $- R^{\phi} h \leq 1$ and by construction $h^{-1} \brs{\N h}^2 \leq \frac{10}{(10 A \ge)^2}$, so that
\begin{align*}
\int_M \bar{t} \left( \frac{\brs{\N h}^2}{h} - R^{\phi} h \right) u dV_g \leq A^{-2} + \ge^2.
\end{align*}
Also by construction we have
\begin{align*}
- \int_M u h \log h \leq&\ c A^{-2}
\end{align*}
for a controlled constant $c$.  Finally, by hypothesis (2) we note that
\begin{align*}
\int_M - \phi h u \leq \gd.
\end{align*}
Plugging the above observations into (\ref{f:pseudo10}) yields
\begin{align*}
\gb(1 - A^{-2}) - (1 + c) A^{-2} - \ge^2 - \gd \leq \int_M \left( - \bar{t} \brs{\N \til{f}}^2 - \til{f} + n \right) (4 \pi \bar{t})^{-\frac{n}{2}} e^{-\til{f}} e^{-\phi} dV_g.
\end{align*}
For $\ge, \gd$ sufficiently small it follows then that
\begin{align*}
\tfrac{1}{2} \gb \leq \int_M \left( - \bar{t} \brs{\N \til{f}}^2 - \til{f} + n \right) (4 \pi \bar{t})^{-\frac{n}{2}} e^{-\til{f}} e^{-\phi} dV_g.
\end{align*}
For $\gd$ chosen sufficiently small this contradicts hypothesis (3) using Theorem \ref{t:isotosob}.
\end{proof}

\begin{rmk} \label{r:diffeo} Theorem \ref{t:RFgenpseudo} implies via smoothing a diffeomorphism finiteness result for the class of manifolds with a weighted scalar curvature lower bound, almost nonnegative weight, volume upper bound, and almost-Euclidean weighted isoperimetric inequality (cf. \cite{KL} Theorem 37.1).
\end{rmk}



\end{document}